\def\author#1{\gdef\autrun{\def\and{\unskip, }#1}\gdef\@author{#1}}
\newtheorem{theorem}{Theorem}[section]
\newtheorem{lemma}[theorem]{Lemma}
\newtheorem{proposition}[theorem]{Proposition}
\theoremstyle{definition}
\newtheorem{remark}[theorem]{Remark}
\numberwithin{equation}{section}
 \makeatletter\setlength{\textwidth}{15.0cm}
\definecolor{ForestGreen}{rgb}{0.15,0.416,0.18}
\definecolor{EgyptBlue}{rgb}{0.063,0.2,0.65}
\begin{document}


\title{Classification of stationary solutions of the $(4 + 1)$-dimensional radial Yang-Mills equation}

\author{Kui Li$^{a}$~~~and~~~Zhitao Zhang$^{b,c,}$\thanks{Corresponding author. This work is supported by  National Key R\&D Program of China (No. 2022YFA1005601), and by National
Natural Science Foundation of China (No. 12031015)}\\
{\small $^a$  School of Mathematics and Statistics, Zhengzhou University, Zhengzhou 450001,}\\
{\small P. R. China. E-mail: likui@zzu.edu.cn}\\
{\small $^b$  HLM, Academy of Mathematics and Systems Science, Chinese Academy}\\
{\small  of Sciences, Beijing 100190,  P. R. China. E-mail: zzt@math.ac.cn}\\
 {\small $^c$  School of Mathematical Sciences, University of Chinese Academy of Sciences,}\\
 {\small Beijing 100049,  P. R. China.}\\
 }
\date{}
\maketitle

\begin{abstract}
We classify the solutions of the planar weighted Allen-Cahn equation arising from the critical equivariant $SO(4)$ Yang-Mills problem. We first give a complete classification for radially symmetric solutions, and then without symmetry assumptions, we show bounded classical solutions are radially symmetric and also classify them.
\par \textbf{Keywords}: Yang-Mills equation; Nonlinear  weighted elliptic equation; Allen-Cahn equation; classification. 
\par \textbf{AMS Subject Classification (2020)}: 35J60, 35B08
\end{abstract}

\section{Introduction}

\quad\ \; In this paper, we consider the following weighted semilinear elliptic equation
\begin{equation}\label{wac}
 -\Delta u = \frac{2}{|x|^2}u\big(1-u^2\big)~~~~in~~~\mathbb{R}^2\backslash\{0\}.
\end{equation}\par

Equation \eqref{wac} arises from the studying of the critical equivariant $SO(4)$ Yang-Mills problem. Suppose that the gauge potential $A_\alpha$ is $SO(4)$-valued for $\alpha=0,1,\cdots,4$, and the curvature $F_{\alpha\beta}$ is given by $F_{\alpha\beta}=\partial_\alpha A_\beta-\partial_\beta A_\alpha+[A_\alpha, A_\beta]$
with $[\cdot,\cdot]$ be the Lie bracket on $SO(4)$ and $\alpha,\beta=0,1,\cdots,4$.
Then in $(4 + 1)$-dimensional Minkowski spacetime the critical Yang-Mills equations take the form
\begin{equation}\label{ym}
\begin{aligned}
&\partial_\alpha F^{\alpha\beta}+\big[A_\alpha, F^{\alpha\beta}\big]=0.
\end{aligned}
\end{equation}
Here the standard convention for raising indices is used.  Assume the radially symmetric ansatz:
\begin{equation}\label{ssa}
A_\alpha^{ij}(t,x)= (\delta_\alpha^ix_j-\delta_\alpha^jx_i)\frac{1-u(t,r)}{r^2},
\end{equation}
where $r=|x|$, $\alpha=0,1,\cdots,4$ and $i,j=1,\cdots,4$. Then we obtain the $(4 + 1)$-dimensional radial Yang-Mills equation (see \cite{D})
\begin{equation}\label{wwac}
\Box u = \frac{2}{r^2}u\big(1-u^2\big)~~~\mbox{in}~~\mathbb{R}\times \mathbb{R}^2
\end{equation}
with $\Box=\partial_{tt}-\Delta=\partial_{tt}-\partial_{rr}-\frac{1}{r}\partial_r$. When considering stationary solutions, equation \eqref{wwac} becomes equation \eqref{wac}.\par

In order to study the critical Yang-Mills problem, a great amount of effort has been devoted to equations of type \eqref{wac} and  type \eqref{wwac}. Note that equation \eqref{wac} admits one-parameter family of solutions, namely
\begin{equation}\label{ops}
 \frac{\lambda^2-|x|^2}{\lambda^2+|x|^2},~~~~~~\lambda\in \mathbb{R}.
\end{equation}
Making use of solutions of type \eqref{ops}, Schlatter et al. \cite{SST} showed the global existence of the critical equivariant Yang-Mills heat flow, Grotowski et al. \cite{GS} obtained a blow-up result for the critical equivariant Yang-Mills heat flow, C\^{o}te et al. \cite{CKM} completely characterized the behavior of solutions of the radial $4$D Yang-Mills equation as time goes to $\pm\infty$, Krieger et al. \cite{KST} proved that the critical Yang-Mills equations \eqref{ym} admit a family of solutions which blow up in finite time with the self-similar rate by a power of $|\log t|$, and in \cite{RR} Rapha\"{e}l et al. exhibited stable finite time blow up regimes for critical Yang-Mills equations \eqref{ym}, derived sharp asymptotics on the dynamics at blow up time and proved quantization of the energy focused at the singularity. In \cite{D1}, Donninger also used solutions of type \eqref{ops} of $(5+1)$ dimensional radial Yang-Mills equation to study the stable self-similar blowup for supercritical Yang-Mills problems. For other related work see \cite{KM}, \cite{KT1}, \cite{KT2}, \cite{OT1,OT2,OT3,OT4,OT5} and \cite{T}.\par


Equation \eqref{wac} is a natural extension of the Allen-Cahn equation
\begin{equation}\label{ac}
 -\Delta u =u\big(1-u^2\big).
\end{equation}
Equation \eqref{ac} originates from the gradient theory of phase transitions (see \cite{AC1} and \cite{CH}), and has deep connections with minimal surfaces (see \cite{CC} and \cite{M}). One of the most interesting and important questions concerning the classification of solutions of the equation \eqref{ac} is {\bf De Giorgi's conjecture (see \cite{CW} and \cite{DG}):}
\emph{Let $u$ be a bounded solution of the Allen-Cahn equation \eqref{ac} in $\mathbb{R}^N$, which is monotone in one direction, say $\frac{\partial u}{\partial x_N}>0$. Then, at least when $N\leq 8$, $u$ is one-dimensional.}\par

Ghoussoub and Gui \cite{GG} showed that De Giorgi's conjecture is true for $N=2$, Ambrosio and Cabr\'{e} \cite{AC} proved this conjecture for $N=3$. Under the additional limit condition
\begin{equation}\label{alc}
 \lim\limits_{x_N\rightarrow\pm\infty}u(x_1,\cdots,x_N)=\pm1,
\end{equation}
it is showed by Savin in \cite{S} that this conjecture holds true for $4\leq N\leq 8$. For $N\geq 9$, counterexamples were constructed by del Pino et al. \cite{DKW}. In \cite{W}, Wang gave a new and variational proof of the above Savin's theorem on this conjecture. Without the condition \eqref{alc}, this conjecture is still open for dimensions $4\leq N\leq 8$.\par

The aims of this paper are to classify the solutions of equation \eqref{wac} with symmetric assumption or without symmetric assumption. Our first result is to give a complete classification for radially symmetric solutions.

\begin{theorem}\label{thm1}
Let $u\in C^2\big(\mathbb{R}^2\backslash\{0\}\big)$ be a radially symmetric solution of equation \eqref{wac}. \\
$(i)$ If $u$ is continuous at the origin, then
\begin{equation*}
\mbox{either }~~u\equiv0~~\mbox{or}~~u(x)=\pm\frac{a^2-|x|^2}{a^2+|x|^2}~~\mbox{in}~~\mathbb{R}^2
\end{equation*}
with $a\in \mathbb{R}$.\\
$(ii)$ If $u$ is discontinuous at the origin, then the function  $v(t):=u(e^t)$ is a period function in $\mathbb{R}$ . Moreover, up to a translation, $v(t)$ can be characterized by its maximum value $M$ and minimum value $-M$ with $M\in(0,1)$ and the
period $T$ being given by the integral
\begin{equation}\label{e1.8}
T=2\int_{-M}^M\frac{\mathrm{d}\theta}{\sqrt{(2-M^2-\theta^2)(M^2-\theta^2)}}.
\end{equation}
\end{theorem}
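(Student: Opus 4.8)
The plan is to reduce \eqref{wac} for radial $u$ to a planar autonomous ODE and read the classification off its phase portrait. Writing $t=\log|x|\in\mathbb R$ and $v(t)=u(e^t)$, a direct computation gives $u_{rr}+r^{-1}u_r=r^{-2}v''$, so \eqref{wac} is equivalent to $v''=2v(v^2-1)$ for $t\in\mathbb R$. Multiplying by $v'$ and integrating yields the conserved energy
\begin{equation*}
(v')^2=g(v):=v^4-2v^2+2E
\end{equation*}
for a constant $E$. The equilibria are $v\equiv0$ (a center, energy $E=0$) and $v\equiv\pm1$ (saddles, energy $E=\tfrac12$), and the one-parameter family \eqref{ops} corresponds to the heteroclinic orbits joining the two saddles. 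Since $r=|x|$ ranges over $(0,\infty)$, i.e. $t$ over all of $\mathbb R$, the task is exactly to classify the global (defined for all $t$) solutions of this ODE.

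The first step is to discard the non-global branches. From $(v')^2=g(v)$ and $g(v)\sim v^4$ as $|v|\to\infty$, any orbit reaching $|v|=\infty$ does so in finite time, via $t-t_0=\int g(v)^{-1/2}\,\mathrm dv$; and on $\{|v|>1\}$ one has $v''=2v(v^2-1)$ pointing toward increasing $|v|$, so an orbit that ever meets $\{|v|>1\}$ is driven outward and is not defined for all $t$. Hence every global solution satisfies $|v(t)|\le1$ for all $t$, and, evaluating the energy at a turning point (or in the limit $t\to\pm\infty$ when $v$ is monotone), has $E\in[0,\tfrac12]$. I expect this blow-up/trapping step to be the main point — elementary, but it is what confines the dynamics to the bounded region $\{|v|\le1\}$, where the remaining possibilities become transparent.

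Next I would treat the three cases. If $E=0$, then $g(v)=v^2(v^2-2)\le0$ on $\{|v|\le1\}$ forces $v\equiv0$, i.e. $u\equiv0$. If $E=\tfrac12$, the energy relation reads $v'=\pm(v^2-1)$, which integrates explicitly; its global solutions are $v\equiv\pm1$ and $v(t)=\pm\frac{a^2-e^{2t}}{a^2+e^{2t}}$, that is $u(x)=\pm\frac{a^2-|x|^2}{a^2+|x|^2}$, with $a=0$ recovering $\pm1$. If $E\in(0,\tfrac12)$, $v$ is a nonconstant periodic orbit around the center, oscillating between a minimum $-M$ and a maximum $M$ with $M^2=1-\sqrt{1-2E}\in(0,1)$; on $\{|v|\le M\}$ one has the factorization $g(v)=(M^2-v^2)(2-M^2-v^2)$, and integrating $\mathrm dt=\mathrm dv/\sqrt{g(v)}$ over one full oscillation gives the period
\begin{equation*}
T=2\int_{-M}^{M}\frac{\mathrm d\theta}{\sqrt{(2-M^2-\theta^2)(M^2-\theta^2)}},
\end{equation*}
which is \eqref{e1.8}. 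Autonomy of the equation produces the ``up to a translation'', and $M\mapsto E$ is a bijection of $(0,1)$ onto $(0,\tfrac12)$.

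Finally I would match these to the continuity dichotomy. Continuity of $u$ at the origin means $v(t)$ has a limit $L$ as $t\to-\infty$; energy conservation first forces $v'\to0$ (otherwise $v$ would run off to infinity), hence $\tfrac{L^4}{2}-L^2+E=0$, and then $v''\to2L(L^2-1)$ forces $2L(L^2-1)=0$, so $L\in\{0,\pm1\}$ and $E\in\{0,\tfrac12\}$. By the case analysis this leaves precisely $u\equiv0$ and $u(x)=\pm\frac{a^2-|x|^2}{a^2+|x|^2}$, which is part $(i)$. Conversely, the periodic solutions ($E\in(0,\tfrac12)$) keep oscillating as $t\to-\infty$, so $u$ has no limit at the origin and is discontinuous there; since every other global solution is one of the continuous ones just listed, the discontinuous case is exactly this periodic family, and then $v(t)=u(e^t)$ is periodic with extrema $\pm M$, $M\in(0,1)$, and period $T$ given by \eqref{e1.8} — this is part $(ii)$.
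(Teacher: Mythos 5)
Your proposal is correct and follows essentially the same route as the paper: the Emden transform, the first integral $(v')^2=(v^2-1)^2+c$ (your $E=(1+c)/2$), the trapping of every global orbit in $\{|v|\le 1\}$, and a case analysis on the conserved constant ($E=0$, $E=\tfrac12$, $E\in(0,\tfrac12)$ matching the paper's $c=-1$, $c=0$, $c\in(-1,0)$). The only cosmetic differences are that you argue confinement and periodicity via the phase portrait rather than the paper's explicit integral estimates, and you read off $\lim_{t\to-\infty}v\in\{0,\pm1\}$ directly from the ODE rather than from the paper's averaging lemma.
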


\begin{remark}\label{rmk1}
(1) By Theorem \ref{thm1}, radially symmetric solutions of equation \eqref{wac} are all bounded and have the same bound $M=1$.\\
(2) We define the energy functional of equation \eqref{wac}  by
\begin{equation}\label{e1.10}
E(u)=\int_{\mathbb{R}^2}\big[|\nabla u|^2+\frac{(u^2-1)^2}{|x|^2}\big]\mathrm{d}x,
\end{equation}
then radially symmetric solutions of equation \eqref{wac} with finite energy must be
\begin{equation}\label{e1.10}
 u(x)=\pm\frac{a^2-|x|^2}{a^2+|x|^2}~~\mbox{in}~~\mathbb{R}^2,
 \end{equation}
where $a$ is a real constant.
\end{remark}

Next, we classify the solutions of equation \eqref{wac} without the radially symmetric assumption. Since the weight function of equation \eqref{wac} is singular at the origin, classical solutions of equation \eqref{wac} will be considered in the class
$$C^2(\mathbb{R}^2\backslash\{0\})\cap C(\mathbb{R}^2).$$
For classical solutions, we have the following classification results.
\begin{theorem}\label{thm2}
Let $u$ be a bounded classical solution of equation \eqref{wac}, then $u$ is radially symmetric and
\begin{equation*}
u\equiv0~~\mbox{or}~~u(x)=\pm\frac{a^2-|x|^2}{a^2+|x|^2}~~\mbox{in}~~\mathbb{R}^2
\end{equation*}
for some $a\in \mathbb{R}$.
\end{theorem}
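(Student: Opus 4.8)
The plan is to reduce the general case to the radially symmetric case already settled in Theorem \ref{thm1}(i). The essential idea is that the nonlinearity $\frac{2}{|x|^2}u(1-u^2)$ is, after the conformal change of variables $x \mapsto e^t\omega$ with $\omega \in \mathbb{S}^1$, an autonomous nonlinearity on the cylinder $\mathbb{R}\times\mathbb{S}^1$; and on a cylinder one expects, via a moving-plane or sliding argument in the $t$-direction combined with the Gidas--Ni--Nirenberg circle of ideas, that bounded solutions cannot depend on the angular variable. Concretely, I would set $v(t,\theta) = u(e^t\cos\theta, e^t\sin\theta)$, so that \eqref{wac} becomes
\begin{equation*}
v_{tt} + v_{\theta\theta} = 2\,v\,(1-v^2) \quad \text{in } \mathbb{R}\times\mathbb{S}^1,
\end{equation*}
with $v$ bounded and, because $u$ extends continuously to the origin, $v(t,\theta) \to u(0)$ as $t \to -\infty$ uniformly in $\theta$. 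The continuity of $u$ at $0$ forces $u(0)\in\{-1,0,1\}$ (plug $x=0$ into the equation heuristically — rigorously one argues from the limit), which will serve as the "boundary condition at $-\infty$" for the sliding method.

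The main steps are as follows. First, establish compactness/asymptotic control: using interior elliptic estimates and the boundedness of $v$, show $v$ and its derivatives are uniformly bounded on $\mathbb{R}\times\mathbb{S}^1$, and analyze the limit sets of $v(\cdot+t_n,\cdot)$ as $t_n\to\pm\infty$; these limits solve the same equation and, being bounded entire solutions on the cylinder, should be shown to be constants equal to a root of $w(1-w^2)$. Second, run a sliding argument in $t$: compare $v(t,\theta)$ with its translates $v(t+\tau,\theta)$, using the maximum principle on the cylinder together with the structure of the nonlinearity (noting $f(w)=2w(1-w^2)$ has $f'<0$ near the stable roots $\pm 1$ and controlled behavior near $0$) to deduce that $v$ is monotone in $t$, or directly that $v$ is independent of $\theta$. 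Alternatively — and this may be cleaner — perform the moving-plane method directly in the original variable using the Kelvin-type symmetry $x\mapsto a^2 x/|x|^2$ under which \eqref{wac} is invariant, and exploit that a bounded classical solution near the origin behaves like a harmonic function (removable singularity for $\Delta u = O(|x|^{-2})$ with $u$ bounded is delicate, so care is needed here). Third, once $v=v(t)$ only, the equation reduces to the ODE $v'' = 2v(1-v^2)$ studied in Theorem \ref{thm1}, and the boundedness plus continuity at the origin rules out the periodic solutions of part (ii), leaving exactly $u\equiv 0$ or $u(x)=\pm\frac{a^2-|x|^2}{a^2+|x|^2}$.

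The hard part will be the symmetrization step — proving $v$ is independent of $\theta$. The weight $|x|^{-2}$ is singular precisely at the point fixed by rotations, so the usual Gidas--Ni--Nirenberg moving-plane argument (which exploits decay at infinity) does not apply off the shelf; one must instead work on the cylinder where translations in $t$ replace dilations, and the difficulty migrates to controlling $v$ at the two ends $t\to\pm\infty$. At $t\to-\infty$ continuity at the origin gives a clean constant limit, but at $t\to+\infty$ one only knows $v$ is bounded, so I expect to need a separate argument — perhaps an energy/monotonicity formula in $t$ for the quantity $\int_{\mathbb{S}^1}\big(\tfrac12 v_t^2 - \tfrac12 v_\theta^2 - F(v)\big)\,d\theta$ (a Hamiltonian-type identity), or a doubling/blow-down analysis of $u$ near infinity — to pin down the behavior at $+\infty$ and close the sliding. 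A secondary obstacle is justifying that the boundedness hypothesis together with the $C^2(\mathbb{R}^2\setminus\{0\})\cap C(\mathbb{R}^2)$ regularity actually yields enough smoothness across the origin to legitimately pass to the cylinder and back; this should follow from a removable-singularity argument for the linear operator with the right-hand side in $L^p_{loc}$ for $p<1$... which fails, so instead one keeps the singularity and argues entirely on $\mathbb{R}\times\mathbb{S}^1$ with the one-sided limit at $-\infty$.
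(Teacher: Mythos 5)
Your overall framework is the same as the paper's: pass to the cylinder via $v(t,\theta)=u(e^t\cos\theta,e^t\sin\theta)$, use continuity at the origin to get $u(0)\in\{0,\pm1\}$ and hence a uniform constant limit at one end, and run a moving-plane/sliding argument in $t$ anchored at that end. But two essential mechanisms are missing. First, you never say how to get from "$v$ is monotone in $t$" to "$v$ is independent of $\theta$" — monotonicity alone does not give this. The paper's key step (Lemma \ref{lemsym}) is that $v_t>0$ and $v_\theta$ both solve the linearized equation $-\Delta z=2(1-3v^2)z$, so $\mathrm{div}\big(v_t^2\nabla(v_\theta/v_t)\big)=0$, and the Berestycki--Caffarelli--Nirenberg Liouville theorem in $\mathbb{R}^2$ forces $v_\theta/v_t$ to be constant, hence zero by $2\pi$-periodicity of $v$ in $\theta$. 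Without this (or an equivalent) ingredient the symmetrization step does not close. Moreover, the moving-plane dichotomy has a second branch — $v$ symmetric about some plane $t=\Lambda$ — which you do not address; the paper disposes of it (Lemma \ref{lemone}) using exactly the Hamiltonian identity you mention, evaluated at $t=\Lambda$ where $\int v_t^2\,d\theta=0$, together with H\"older's inequality, to force $v\equiv\pm1$.

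Second, the case $u(0)=0$ cannot be handled by sliding at all: $0$ is an \emph{unstable} zero of $f(w)=2w(1-w^2)$ ($f'(0)=2>0$), so near the end where $v\to 0$ the linearized operator has the wrong sign and the maximum-principle comparison that starts the moving plane fails. The paper treats this case by an entirely separate integral argument (Lemma \ref{lemequalzero}): combining the conserved Hamiltonian $\int_0^{2\pi}\big(v_\theta^2-v_t^2-2v^2+v^4\big)\,d\theta$ with the identity obtained by multiplying the equation by $v$ and integrating, one gets $\varphi'(t)\geq \varphi(t)^2/\pi$ for $\varphi(t)=\int_0^{2\pi}v^2\,d\theta$ with $\varphi$ nondecreasing and $\varphi\to 0$ at $-\infty$, whence $\varphi\equiv 0$. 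Your proposal lumps all three values of $u(0)$ into one "boundary condition for the sliding method," which does not work for $u(0)=0$. On the positive side, your worry about controlling $v$ at the other end (spatial infinity of $u$) is resolved more cheaply than you fear: once one proves $|u|\leq 1$ for bounded solutions (the paper's Lemma \ref{lembdd}, via a standard translation--compactness maximum-principle argument), the reflected function automatically satisfies $v^\lambda\leq 1=\lim v$ at the good end, which is all the moving-plane comparison needs; no blow-down or energy monotonicity at infinity is required.
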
\par

\begin{remark}\label{rmk2}
By Theorem \ref{thm2} and Remark \ref{rmk1}, we deduce that nontrivial bounded classical solutions of equation \eqref{wac} have finite energy.
\end{remark}

The ideas of proofs of Theorem \ref{thm1} are as follows. We first apply the Emden type transform to equation \eqref{wac} and get an equivalent equation which is one-dimensional Allen-Cahn equation (see equation \eqref{equationforv}). For this equivalent equation, we only need to classify solutions that are not finite energy (see Remark \ref{rmkfiniteenergy}). We first use the first integral method to obtain a first order differential identity (see Lemma \ref{lemidentity}). By this identity and some integral techniques, we prove that solutions of the equivalent equation are bounded (see Lemma \ref{lemboundforv}). Using the boundness results for solutions, we show that the constant $c$ in the identity belongs to the interval $[-1,0]$ (see Lemma \ref{lemboundforc}). Also making full use of classical ODE theory, some integral techniques, the boundness results for solutions and the constant $c$, we completely classify the solutions of the equivalent equation for $c=-1$ (see Lemma \ref{classification1}) and for $c\in(-1,0)$ (see Lemma \ref{classification2}) respectively. By these classification results, we show Theorem \ref{thm1} holds. \par

In order to prove Theorem \ref{thm2}, we first use some integral techniques to analyze the values of solutions at the origin (see Lemma \ref{lemzero}), and use the Emden transform to obtain an equivalent Allen-Cahn equation with period condition (see Lemma \ref{lemv}). Apply the elliptic theory to this equivalent equation with period condition, we show that bounded classical solutions have bound $M=1$ (see Lemma \ref{lembdd}).  If the value of solutions to equation \eqref{wac} at the origin is equal to $0$, then by some integral techniques and ODE techniques we prove that solutions of equivalent equation are identically zero and thus obtain Theorem \ref{thm2} (see Lemma \ref{lemequalzero}). If the value of solutions of equation \eqref{wac} at the origin is equal to $1$ or $-1$, then by the bound $M=1$ and the methods of moving planes we show that solutions of equivalent equation are either symmetric with respect to some plane $t=T$ or strictly monotonic with respect to the variable $t$. In the first case, we also use some integral techniques and ODE techniques to prove that solutions of equivalent equation  are are identically $1$ or identically $-1$ (see Lemma \ref{lemone}). In the second case, we use a Liouville type theorem of Berestycki, Caffarelli and Nirenberg to prove that solutions of equivalent equation  are independent of $\theta$ (see Lemma \ref{lemsym}). Making use of Lemma \ref{lemone} and Lemma \ref{lemsym}, we prove Theorem \ref{thm2}.
\par

The rest of this paper is organized as follows. In section 2, we give some preliminaries; in section 3, we classify radially symmetric solutions and prove Theorem \ref{thm1}; and finally in section 4, we study  bounded classical solutions and prove Theorem \ref{thm2}.

\section{Preliminaries}
\numberwithin{equation}{section}
 \setcounter{equation}{0}

In this section, we give some preliminaries that will be useful in the proofs of our main theorems.

\begin{lemma}\label{lemzero}
Suppose that $u\in C^2\big(\mathbb{R}^2\backslash\{0\}\big)$ is a solution of equation \eqref{wac} with $\lim\limits_{x\rightarrow 0}u(x)=a\in \mathbb{R}$. Then $a=\pm 1$ or $a=0$.
\end{lemma}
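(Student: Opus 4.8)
The plan is to exploit the structure of the weighted equation near the origin by passing to the Emden-type (logarithmic) variable and extracting an averaged ODE for the mean of $u$ over circles. Concretely, suppose $u(x)\to a$ as $x\to 0$. First I would set, for $r>0$, the spherical average $\bar u(r)=\frac{1}{2\pi}\int_0^{2\pi}u(r,\theta)\,\mathrm{d}\theta$, so that $\bar u(r)\to a$ as $r\to 0^+$. Integrating equation \eqref{wac} over the circle of radius $r$ and using that $\frac{1}{2\pi}\int_0^{2\pi}\Delta u\,\mathrm{d}\theta = \bar u''(r)+\frac{1}{r}\bar u'(r)$, one obtains
\begin{equation}\label{avgeq}
-\Big(\bar u''(r)+\frac{1}{r}\bar u'(r)\Big)=\frac{2}{r^2}\,\overline{u(1-u^2)}(r),
\end{equation}
where $\overline{u(1-u^2)}(r)$ denotes the spherical average of the nonlinearity. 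Since $u$ is continuous up to the origin, $\overline{u(1-u^2)}(r)\to a(1-a^2)$ as $r\to 0$. Rewriting \eqref{avgeq} in the variable $t=\log r$ (with $v(t)=\bar u(e^t)$) converts the left side into $-e^{-2t}v''(t)$, so that
\begin{equation}\label{vbar}
-v''(t)=2\,\overline{u(1-u^2)}(e^t)\longrightarrow 2a(1-a^2)\quad\text{as }t\to-\infty.
\end{equation}

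The key step is then to argue that $2a(1-a^2)=0$, i.e.\ $a\in\{0,\pm1\}$. Suppose not; then by \eqref{vbar} we have $v''(t)\to -2a(1-a^2)=:-\ell\neq 0$ as $t\to-\infty$, so $v'(t)$ behaves like $-\ell t$ and $v(t)$ like $-\tfrac{\ell}{2}t^2$ as $t\to-\infty$; in particular $v(t)=\bar u(e^t)$ is unbounded as $t\to-\infty$, i.e.\ $\bar u(r)$ is unbounded as $r\to 0^+$. This contradicts $\bar u(r)\to a\in\mathbb{R}$. Hence $a(1-a^2)=0$, which gives $a=0$ or $a=\pm1$. (Alternatively, one can integrate \eqref{avgeq} directly: multiplying by $r$ and integrating from $0$ to $\rho$ gives $-\rho\bar u'(\rho)=\int_0^\rho \frac{2}{s}\overline{u(1-u^2)}(s)\,\mathrm{d}s$, and if $a(1-a^2)\neq 0$ the integrand is $\sim \frac{2a(1-a^2)}{s}$ near $0$, so the integral diverges logarithmically, forcing $\rho\bar u'(\rho)\to\pm\infty$ and again contradicting that $\bar u$ has a finite limit — since a $C^2$ function with a finite limit at $0$ whose radial derivative blows up like $-\frac{C}{\rho}$ cannot stay bounded.)

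I expect the main obstacle to be making the limiting argument rigorous near $r=0$, where \eqref{wac} is only assumed to hold on $\mathbb{R}^2\setminus\{0\}$ and $u$ is a priori only continuous (not $C^1$ or $C^2$) at the origin: one must justify differentiating under the integral sign in defining $\bar u$, justify the spherical-average identity \eqref{avgeq} on $(0,\infty)$, and carefully control the error term $\overline{u(1-u^2)}(r)-a(1-a^2)$, which is $o(1)$ but with no rate, so the divergence of the relevant integral should be phrased via $\liminf/\limsup$ rather than an exact asymptotic. Once the blow-up of either $\bar u$ or $r\bar u'$ near $0$ is established under the assumption $a(1-a^2)\neq 0$, the contradiction with the hypothesis $\lim_{x\to0}u(x)=a\in\mathbb{R}$ is immediate, completing the proof.
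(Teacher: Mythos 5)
Your proposal is correct and follows essentially the same route as the paper: take the spherical average $\bar u$, observe that the averaged nonlinearity tends to $a(1-a^2)$ near the origin, and derive a contradiction with the finiteness of $\lim_{r\to 0}\bar u(r)$ when $a(1-a^2)\neq 0$; your Emden-variable computation $-v''(t)\to 2a(1-a^2)$ is exactly the paper's integration of $-(r\bar u')'\geq l/r$ rewritten in $t=\log r$. The only caveat is your parenthetical alternative, where the identity $-\rho\bar u'(\rho)=\int_0^\rho\frac{2}{s}\overline{u(1-u^2)}\,\mathrm{d}s$ presupposes convergence at $s=0$, which fails in the very case being excluded; the paper (and your main argument) avoids this by integrating from $r$ to a fixed $r_0>0$.
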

\begin{proof} Suppose that the conclusion doesn't hold. Then there exists $l\in \mathbb{R}\backslash\{0\}$ such that
\begin{equation}\label{e2.1}
l=a\big(1-a^2\big).
\end{equation}
Without loss of generality, we may assume that $l>0$.\par

For any $r>0$, we define
$$\overline{u}(r)=\frac{1}{|S_1(0)|}\int_{S_1(0)}u\big(r\theta)\mathrm{d}\theta,$$
where $S_1(0)=\{x\in\mathbb{R}^2|~ |x|=1\}$.\par
Then direct calculations imply that
\begin{equation}\label{e2.2}
-\overline{u}''-\frac{1}{r}\overline{u}'=\frac{2}{r^2}\overline{u(1-u^2\big)}~~~\mbox{in}~~~(0,\infty).
\end{equation}\par

By \eqref{e2.1} and \eqref{e2.2}, we conclude that there exists a small positive constant $r_0$ such that for any $r\in(0,r_0)$,
\begin{equation}\label{e2.3}
-\overline{u}''-\frac{1}{r}\overline{u}'\geq\frac{l}{r^2}~~\mbox{and}~~-\big(r\overline{u}'\big)'\geq \frac{l}{r}.
\end{equation}
For any $r\in(0,r_0)$, integrating from $r$ to $r_0$ yields
\begin{equation}\label{e2.4}
r\overline{u}'(r)-r_0\overline{u}'(r_0)\geq l\int_r^{r_0}\frac{1}{\rho}d\rho=l(\ln r_0-\ln r)
\end{equation}
and
\begin{equation}\label{e2.5}
r\overline{u}'(r)\geq r_0\overline{u}'(r_0)+l\ln r_0-l\ln r.
\end{equation}
By \eqref{e2.5}, we deduce the existence of a positive constant $r_1\in (0, r_0)$ such that for any $r\in(0,r_1)$,
\begin{equation}\label{e2.6}
r\overline{u}'(r)\geq 1~~\mbox{and}~~\overline{u}'(r)\geq \frac{1}{r}.
\end{equation}
Hence for any $0<\varepsilon<r_1$, integrating from $\varepsilon$ to $r_1$ yields
$$\overline{u}(r)-\overline{u}(\varepsilon)\geq \int_{\varepsilon}^{r_1}\frac{1}{\rho}d\rho=\ln r_1-\ln \varepsilon.$$
Thus
$$\overline{u}(r_1)-a=\lim \limits_{\varepsilon\rightarrow0^+}[\overline{u}(r_1)-\overline{u}(\varepsilon)]\geq \lim \limits_{\varepsilon\rightarrow0^+}(\ln r_1-\ln \varepsilon)=\infty,$$
which is a contradiction, hence $l=0$ and we obtain this lemma.
\end{proof}

By the well known results about Kelvin transform, we have
\begin{lemma}\label{lemkel}
Suppose that $u\in C^2\big(\mathbb{R}^2\backslash\{0\}\big)$ is a solution of equation \eqref{wac}. If we define
\begin{equation*}
\tilde{u}(x)=u\big(\frac{x}{|x|^2}\big),~~x\in\mathbb{R}^2\backslash\{0\},
\end{equation*}
then $\tilde{u}(x)$ is also a solution of equation \eqref{wac}.
\end{lemma}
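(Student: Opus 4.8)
The plan is to verify the claim by a direct computation exploiting the conformal structure of the problem in dimension two. Write $y = x/|x|^2$ for the inversion, so that $\tilde u(x) = u(y)$ and, since $|y| = 1/|x|$, one has $1/|y|^2 = |x|^2$. I would first recall the standard transformation rule for the Laplacian under inversion: in $\mathbb{R}^N$ the Kelvin transform $K[u](x) = |x|^{2-N}u(x/|x|^2)$ satisfies $\Delta K[u](x) = |x|^{-2-N}(\Delta u)(x/|x|^2)$; specializing to $N = 2$, where the conformal factor $|x|^{2-N}$ is trivial, this reads
\begin{equation*}
\Delta \tilde u(x) = |x|^{-4}(\Delta u)\!\left(\frac{x}{|x|^2}\right),\qquad x \in \mathbb{R}^2\setminus\{0\}.
\end{equation*}
This identity is checked by a one-line chain-rule computation, or may simply be cited as well known.

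Next I would substitute the equation \eqref{wac} for $u$ into the right-hand side. Since $u$ solves $-\Delta u = \tfrac{2}{|y|^2}u(1-u^2)$ at the point $y = x/|x|^2$, and $1/|y|^2 = |x|^2$, one obtains
\begin{equation*}
-\Delta \tilde u(x) = |x|^{-4}\cdot\frac{2}{|y|^2}\,u(y)\bigl(1-u(y)^2\bigr) = |x|^{-4}\cdot 2|x|^2\,\tilde u(x)\bigl(1-\tilde u(x)^2\bigr) = \frac{2}{|x|^2}\,\tilde u(x)\bigl(1-\tilde u(x)^2\bigr),
\end{equation*}
which is exactly \eqref{wac} for $\tilde u$. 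Regularity is immediate: $x \mapsto x/|x|^2$ is a smooth diffeomorphism of $\mathbb{R}^2\setminus\{0\}$ onto itself, so $\tilde u \in C^2(\mathbb{R}^2\setminus\{0\})$.

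As an alternative and more transparent route, I would pass to the logarithmic polar coordinates used elsewhere in the paper: writing $v(t,\theta) = u(e^t\cos\theta, e^t\sin\theta)$, a direct computation gives $\Delta = e^{-2t}(\partial_{tt}+\partial_{\theta\theta})$ and $\tfrac{2}{|x|^2} = 2e^{-2t}$, so \eqref{wac} is equivalent to $-(\partial_{tt}+\partial_{\theta\theta})v = 2v(1-v^2)$ on the cylinder $\mathbb{R}\times S^1$. The inversion $x \mapsto x/|x|^2$ is precisely the reflection $t \mapsto -t$ in these coordinates, under which this semilinear equation is manifestly invariant; translating back yields the lemma. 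There is no genuine obstacle: the only thing to watch is the bookkeeping of the Jacobian (or chain-rule) factors in the change of variables, which is entirely routine.
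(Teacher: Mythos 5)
Your proof is correct and follows exactly the route the paper intends: the paper states this lemma as an immediate consequence of the well-known Kelvin transform identity (giving no further detail), and your computation with $\Delta\tilde u(x)=|x|^{-4}(\Delta u)(x/|x|^2)$ and $1/|y|^2=|x|^2$ is precisely the verification being invoked. The alternative cylinder-coordinate argument you sketch is also consistent with the paper's own Emden transform in Lemma \ref{lemv}, but it is not needed beyond the first computation.
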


For any $\theta, t\in \mathbb{R}$, we define $r=e^t$ and
\begin{equation}\label{e2.7}
v(t,\theta)=u(r\cos \theta, r\sin \theta).
\end{equation}
Then by direct calculations for the Emden transform \eqref{e2.7}, we have the following result.

\begin{lemma}\label{lemv}
If $u\in C^2\big(\mathbb{R}^2\backslash\{0\}\big)$ is a solution of equation \eqref{wac}, then
\begin{equation}\label{e2.8}
-\partial_{tt}v-\partial_{\theta\theta}v=2v\big(1-v^2\big)
\end{equation}
with $v(t,\theta)=v(t,\theta+2\pi)$ for any $\theta, t\in \mathbb{R}$.
\end{lemma}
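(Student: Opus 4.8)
The plan is to write \eqref{wac} in polar coordinates and then apply the logarithmic change of radial variable $t=\ln r$, i.e.\ the Emden transform \eqref{e2.7}. Writing $u=u(r,\theta)$ with $r=|x|>0$ and $\theta$ the angular variable, the Laplacian reads
\begin{equation*}
\Delta u=\partial_{rr}u+\frac1r\partial_r u+\frac1{r^2}\partial_{\theta\theta}u,
\end{equation*}
so that, multiplying \eqref{wac} by $r^2$ to absorb the singular weight, the equation becomes
\begin{equation*}
-\big(r^2\partial_{rr}u+r\,\partial_r u+\partial_{\theta\theta}u\big)=2u\big(1-u^2\big)\qquad\text{in }(0,\infty)\times\mathbb{R}.
\end{equation*}

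Next I would record how the radial derivatives transform under $r=e^t$. By the chain rule $\partial_t=r\,\partial_r$, hence $\partial_{tt}=r\,\partial_r\big(r\,\partial_r\big)=r^2\partial_{rr}+r\,\partial_r$; therefore $r^2\partial_{rr}u+r\,\partial_r u=\partial_{tt}v$, while the angular term is unchanged, $\partial_{\theta\theta}u=\partial_{\theta\theta}v$. Substituting into the displayed identity yields exactly $-\partial_{tt}v-\partial_{\theta\theta}v=2v(1-v^2)$. Since $(t,\theta)\mapsto(e^t\cos\theta,e^t\sin\theta)$ is a smooth covering map from $\mathbb{R}^2$ onto $\mathbb{R}^2\backslash\{0\}$, the composition $v$ lies in $C^2(\mathbb{R}^2)$ and the computation is valid in the classical sense on all of the $(t,\theta)$-plane. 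Finally, $2\pi$-periodicity in $\theta$ is immediate, since $(e^t\cos(\theta+2\pi),e^t\sin(\theta+2\pi))=(e^t\cos\theta,e^t\sin\theta)$ forces $v(t,\theta+2\pi)=v(t,\theta)$.

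There is no genuine obstacle here; the statement really is a matter of ``direct calculations.'' The only point requiring a little care is the second-derivative bookkeeping in the chain rule — in particular that the first-order term $r\,\partial_r$ produced by differentiating $r\,\partial_r$ is precisely what combines with $r^2\partial_{rr}$ to give $\partial_{tt}$, which is exactly why the damping term $\frac1r\partial_r$ in polar coordinates disappears after the logarithmic substitution — together with the observation that the Emden transform is a local diffeomorphism, so that no regularity is lost and \eqref{e2.8} holds classically.
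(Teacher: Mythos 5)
Your computation is correct and is precisely the ``direct calculation'' the paper invokes without writing out: pass to polar coordinates, multiply by $r^2$, and use that $\partial_{tt}=r^2\partial_{rr}+r\,\partial_r$ under $r=e^t$, with $2\pi$-periodicity in $\theta$ being automatic. Nothing to add.
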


\begin{remark}\label{equivalent}
In following sections, we will make use of equation \eqref{e2.8} to classify solutions of equation \eqref{wac}.
\end{remark}

In the last part of this section, we give a bound for the bounded solutions of equation \eqref{wac}.
\begin{lemma}\label{lembdd}
If $u\in C^2\big(\mathbb{R}^2\backslash\{0\}\big)$ is a bounded solution of equation \eqref{wac}, then
$$|u(x)|\leq 1,~~~x\in \mathbb{R}^2.$$
\end{lemma}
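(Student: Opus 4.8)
The plan is to work with the Emden-transformed equation \eqref{e2.8} from Lemma \ref{lemv}, which is defined on the whole cylinder $\mathbb{R}\times(\mathbb{R}/2\pi\mathbb{Z})$ and is autonomous (the weight $2/|x|^2$ has been absorbed into the transform). Since $u$ is bounded, so is $v(t,\theta)=u(e^t\cos\theta,e^t\sin\theta)$, and standard interior elliptic estimates applied to the uniformly elliptic equation $-\partial_{tt}v-\partial_{\theta\theta}v=2v(1-v^2)$ with bounded right-hand side give uniform $C^{2,\alpha}$ bounds on $v$; in particular $\partial_{tt}v$ and $\partial_{\theta\theta}v$ are bounded. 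First I would set $m=\sup v$, which is finite, and argue that it suffices to show $m\le 1$, the bound $\inf v\ge -1$ following by replacing $u$ with $-u$.

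The core is a sliding/comparison argument at the supremum. Suppose for contradiction that $m>1$. Pick a sequence $(t_k,\theta_k)$ with $v(t_k,\theta_k)\to m$. By periodicity in $\theta$ we may take $\theta_k\in[0,2\pi]$, and after passing to a subsequence $\theta_k\to\theta_\infty$; translating in $t$, set $v_k(t,\theta):=v(t+t_k,\theta)$. The uniform $C^{2,\alpha}$ bounds and Arzelà–Ascoli give a subsequence with $v_k\to v_\infty$ in $C^2_{\mathrm{loc}}$, where $v_\infty$ solves the same equation \eqref{e2.8}, is bounded by $m$, and attains its maximum $m$ at the interior point $(0,\theta_\infty)$. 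At an interior maximum $\Delta v_\infty\le 0$, hence $-\partial_{tt}v_\infty-\partial_{\theta\theta}v_\infty\ge 0$ there, while the right-hand side is $2m(1-m^2)<0$ since $m>1$ — a contradiction. (Equivalently one can invoke the strong maximum principle: $w:=m-v_\infty\ge 0$ satisfies $-\Delta w = -2v_\infty(1-v_\infty^2)\le$ something of the form $c(t,\theta)w$ near the max after writing $2v(1-v^2)-2m(1-m^2)$ as $c(t,\theta)(v-m)$ with $c$ bounded; then $w$ vanishing at an interior point forces $w\equiv 0$, i.e. $v_\infty\equiv m$, but then plugging into the equation gives $0=2m(1-m^2)\neq 0$, again a contradiction.)

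I expect the main obstacle to be the non-compactness in the $t$-direction: the maximum $m$ may only be approached in the limit $t\to\pm\infty$ and need not be attained, which is exactly why the translation-and-limit step (producing $v_\infty$ on the full cylinder that genuinely attains $m$) is needed, and this in turn relies on the uniform elliptic estimates that the bounded weight-free form \eqref{e2.8} provides. One should check that the limiting equation is still \eqref{e2.8} (it is, by autonomy) and that the periodicity in $\theta$ passes to the limit (it does). Once $v_\infty$ attaining its max is in hand, the maximum-principle contradiction is immediate. Translating back, $|v|\le 1$ on the cylinder gives $|u(x)|\le 1$ for all $x\neq 0$, and by continuity (or by Lemma \ref{lemzero}, which forces the limit at the origin to lie in $\{0,\pm1\}$) the bound extends to $x=0$, completing the proof.
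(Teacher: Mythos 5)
Your proposal is correct and follows essentially the same route as the paper: pass to the Emden--transformed equation, reduce to bounding $\sup v$, handle the case where the supremum is not attained by translating in $t$ and extracting a $C^2_{\mathrm{loc}}$ limit via elliptic estimates, and conclude with the maximum principle at an interior maximum of the limit. The only cosmetic difference is that you argue by contradiction from $m>1$, whereas the paper directly reads off $2M(1-M^2)\ge 0$ at the maximum; these are equivalent.
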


\begin{proof}
Let
$$M=\sup\limits_{x\in\mathbb{R}^2}u(x)$$
and $v$ be define as in \eqref{e2.7}. Then
$$\sup\limits_{(t,\theta)\in\mathbb{R}^2}v(t,\theta)=M$$
 and $v$ is a solution of equation \eqref{e2.8}. Since the nonlinear term in equation \eqref{wac} is odd, we only need to show that $M\leq 1$.\par

If $M$ is obtained by $v$ at some point in $\mathbb{R}^2$, then applying Maximum principle to equation \eqref{e2.8}, we deduce that $2M\big(1-M^2\big)\geq0$ and hence $M\leq 1$.\par

If $v(t,\theta)< M$ for all $(t,\theta)\in\mathbb{R}^2$, then there exist $\big\{\theta_n\big\}_{n=1}^\infty\subseteq[0,2\pi]$ with $\lim\limits_{n\rightarrow\infty}\theta_n=\theta_0\in[0,2\pi]$, and $\big\{t_n\big\}_{n=1}^\infty\subseteq\mathbb{R}$ with $\lim\limits_{n\rightarrow\infty}t_n=\infty$ or $\lim\limits_{n\rightarrow\infty}t_n=-\infty$ such that
$$\lim\limits_{n\rightarrow\infty}v(t_n,\theta_n)=M.$$
Let $v_n(t,\theta)=v(t+t_n,\theta)$ for $(t,\theta)\in\mathbb{R}^2$. Then $\big\{v_n\big\}_{n=1}^\infty$ are uniformly bounded and all solve equation \eqref{e2.8}. By elliptic theory, there exists $v_0\in C^2\big(\mathbb{R}^2\big)$ such that
\begin{equation}\label{e2.9}
v_n\rightarrow v_0~\mbox{locally~in~}C^2\big(\mathbb{R}^2\big).
\end{equation}
Hence $v_0$ also solves equation \eqref{e2.8} and obtains its maximum $M$ at $\big(0,\theta_0\big)$, which implies that $M\leq 1$ by the previous arguments.
\end{proof}

\section{Classification of radially symmetric solutions}
\numberwithin{equation}{section}
 \setcounter{equation}{0}

\quad\ \; Let $u$ be a radially symmetric solution of equation \eqref{wac}, and $v(t)=u(e^t)$ for $t\in\mathbb{R}$. Then by Lemma \ref{lemv}, we deduce that
\begin{equation}\label{equationforv}
-v_{tt}=2v\big(1-v^2\big)~~\mbox{in}~~\mathbb{R},
\end{equation}
which is one-dimensional Allen-Cahn equation. Multiplying both sides of equation \eqref{equationforv} by $v_t$ and applying the first integral method, we have the following identity.
\begin{lemma}\label{lemidentity}
There exists  a constant $c\in \mathbb{R}$ such that
\begin{equation}\label{e3.2}
(v_t)^2=(v^2-1)^2+c~~\mbox{in}~~\mathbb{R}.
\end{equation}
\end{lemma}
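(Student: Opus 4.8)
The plan is to prove the first-integral identity \eqref{e3.2} by exploiting the autonomy of the ODE \eqref{equationforv}, i.e.\ the absence of an explicit $t$-dependence. First I would multiply both sides of \eqref{equationforv} by $v_t$ to obtain $-v_{tt}v_t = 2v v_t - 2v^3 v_t$. Each side is a total $t$-derivative: the left side equals $-\frac{1}{2}\frac{\mathrm{d}}{\mathrm{d}t}\big((v_t)^2\big)$, while the right side equals $\frac{\mathrm{d}}{\mathrm{d}t}\big(v^2 - \frac{1}{2}v^4\big) = -\frac{1}{2}\frac{\mathrm{d}}{\mathrm{d}t}\big((v^2-1)^2\big)$, after completing the square and discarding the constant term (which has zero derivative). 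Since $u\in C^2(\mathbb{R}^2\setminus\{0\})$, the function $v(t)=u(e^t)$ is $C^2$ on $\mathbb{R}$, so $(v_t)^2$ and $(v^2-1)^2$ are both $C^1$ and this manipulation is legitimate.

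Next I would integrate: the identity above gives $\frac{\mathrm{d}}{\mathrm{d}t}\big[(v_t)^2 - (v^2-1)^2\big] = 0$ on all of $\mathbb{R}$, so the bracketed quantity is constant; call that constant $c$. This yields exactly \eqref{e3.2}, namely $(v_t)^2 = (v^2-1)^2 + c$ in $\mathbb{R}$. The only subtlety worth flagging is the bookkeeping in completing the square: one must check that $2vv_t - 2v^3v_t = \frac{\mathrm{d}}{\mathrm{d}t}\big(-\frac{1}{2}(v^2-1)^2\big)$, which follows since $\frac{\mathrm{d}}{\mathrm{d}t}\big(-\frac{1}{2}(v^2-1)^2\big) = -(v^2-1)\cdot 2vv_t = -2v^3v_t + 2vv_t$, matching term by term.

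There is no real obstacle here; the statement is the classical observation that a second-order autonomous ODE admits a conserved ``energy.'' The only thing to be careful about is regularity — ensuring $v$ is $C^2$ so that the chain-rule computations and the conclusion ``derivative zero implies constant on the connected set $\mathbb{R}$'' are both valid — and this is immediate from the hypothesis $u\in C^2(\mathbb{R}^2\setminus\{0\})$ together with the smoothness of $t\mapsto e^t$. I would therefore present the proof in three short lines: multiply by $v_t$, recognize both sides as exact derivatives, and integrate to produce the constant $c$.
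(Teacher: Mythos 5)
Your proof is correct and is exactly the paper's argument: the authors state the lemma immediately after the line ``Multiplying both sides of equation \eqref{equationforv} by $v_t$ and applying the first integral method,'' which is precisely the computation you carry out in detail. Nothing further is needed.
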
\par

\begin{remark}\label{rmkfiniteenergy}
For equation \eqref{equationforv}, we can define its energy functional
\begin{equation*}
E_0(v)=\frac{1}{2}\int_\mathbb{R}\big[(v_t)^2+(v^2-1)^2\big]\mathrm{d}t.
\end{equation*}
Then the solution $v$ of equation \eqref{equationforv} is finite energy if only if $c$ in \eqref{e3.2} is equal to $0$. Hence for finite energy solution $v$, we have
\begin{equation*}
(v_t)^2=(v^2-1)^2~~\mbox{in}~~\mathbb{R},
\end{equation*}
which implies the following well-known classification:
\begin{equation}\label{e3.11}
  v=\pm\frac{a^2-e^{2t}}{a^2+e^{2t}}~~\mbox{in}~\mathbb{R},
  \end{equation}
where $a$ is a real constant.
\end{remark}

To classify  radially symmetric solutions of equation \eqref{wac} without the finite energy assumption and the boundness assupmtion, we will make full use the identity \eqref{e3.2}. 

\begin{lemma}\label{lemboundforv}
Let $v$ be a solution of equation \eqref{equationforv}. Then $|v(t)|\leq 1$ for all $t\in\mathbb{R}$.
\end{lemma}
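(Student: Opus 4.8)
The plan is to combine the first integral \eqref{e3.2} with the fact that $v$ is a genuine $C^2$ solution defined on the whole line, and to derive a contradiction from the assumption that $v$ ever exceeds $1$. Since the nonlinearity in \eqref{equationforv} is odd, it suffices to prove $v\le 1$ on $\mathbb{R}$; applying this to $-v$, which solves the same equation, then gives $v\ge -1$ as well. So assume, for contradiction, that $v(t_0)>1$ for some $t_0\in\mathbb{R}$.

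The heart of the matter is a finite-time blow-up statement: \emph{if $w$ solves \eqref{equationforv} on an interval, $w(s_0)>1$ and $w_t(s_0)\ge 0$, then $w$ cannot be continued to all of $[s_0,\infty)$.} Indeed, on the open set $\{w>1\}$ the equation gives $w_{tt}=2w(w^2-1)>0$; in particular, if $w_t(s_0)=0$ then $w_t$ becomes positive immediately after $s_0$, so after replacing $s_0$ by a slightly larger time we may assume $w_t(s_0)=\delta_0>0$. A first-exit-time argument based on $w_{tt}>0$ on $\{w>1\}$ shows that $w>1$ and $w_t\ge\delta_0$ persist throughout the maximal forward interval of existence, so $w$ grows at least linearly. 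Once $w$ is large, the first integral \eqref{e3.2} gives $w_t=\sqrt{(w^2-1)^2+c}\ge\tfrac12 w^2$ (with a threshold on $w$ depending only on $c$), hence $\bigl(-1/w\bigr)_t\ge\tfrac12$; integrating, $1/w$ would reach $0$ in finite time, which is impossible. This estimate is the step I expect to be the crux; everything else is bookkeeping. (One could replace the appeal to \eqref{e3.2} here by the standard trick of multiplying the equation by $w_t$ and integrating, but using \eqref{e3.2} is shortest.)

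With this tool the proof closes quickly. Since our $v$ is defined on all of $\mathbb{R}$, applying the blow-up statement at $s_0=t_0$ rules out $v_t(t_0)\ge 0$, so $v_t(t_0)<0$. Applying it instead to $\tilde v(s):=v(-s)$ --- which solves the same equation, satisfies the same identity \eqref{e3.2}, and is also defined on all of $\mathbb{R}$ --- at the point $-t_0$, where $\tilde v(-t_0)=v(t_0)>1$ and $\tilde v_t(-t_0)=-v_t(t_0)$, rules out $\tilde v_t(-t_0)\ge 0$, i.e.\ forces $v_t(t_0)>0$. These two conclusions contradict each other, so no point $t_0$ with $v(t_0)>1$ can exist; hence $v\le 1$, and by the oddness reduction $|v|\le 1$ on $\mathbb{R}$. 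When writing the details, the only points needing care are the first-exit-time persistence claim and the uniform threshold in $w_t\ge\tfrac12 w^2$; both are elementary.
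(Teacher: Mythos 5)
Your argument is correct, and it takes a genuinely different route from the paper's. The paper first proves only that $v$ is \emph{bounded}: assuming unboundedness, it distinguishes the case where $v$ tends to $-\infty$ as $t\to-\infty$ (handled by the same Riccati-type estimate $v_t\ge\tfrac12 v^2$ coming from the first integral, integrated backward) from the oscillatory case, where unboundedly large critical values would contradict the identity $(v_t)^2=(v^2-1)^2+c$ at points where $v_t=0$; it then invokes the separate Lemma \ref{lembdd} (a maximum-principle-plus-elliptic-compactness argument for the PDE \eqref{e2.8}) to upgrade boundedness to the sharp bound $|v|\le 1$. You instead get $|v|\le 1$ in one pass by a purely ODE argument: convexity of $v$ on the set $\{v>1\}$ forces any excursion above $1$ with nonnegative slope to persist and grow at least linearly, after which the same Riccati estimate $(-1/w)_t\ge\tfrac12$ produces finite-time blow-up; applying this forward and backward pins $v_t(t_0)$ into an empty set of signs. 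The shared core is the blow-up estimate; what your version buys is self-containedness at the ODE level (no appeal to Lemma \ref{lembdd} or elliptic theory) and the sharp constant $1$ directly. The two points you flag for a full write-up are indeed the only ones needing care --- the first-exit-time persistence of $\{w>1,\ w_t\ge\delta_0\}$ and the threshold $\tfrac34 w^4-2w^2+1+c\ge 0$ for $w$ large depending on $c$ --- and both are routine; note also that when you apply \eqref{e3.2} to $\tilde v(s)=v(-s)$ the constant $c$ is unchanged, so the threshold is uniform across your two applications.
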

\begin{proof}
Suppose that $v$ is unbounded in $\mathbb{R}$. Without loss of generality, we assume that there exists a sequence $\{t_n\}_{n=1}^\infty$ such that
\begin{equation}\label{e3.3}
\lim\limits_{n\rightarrow\infty}t_n=-\infty~~\mbox{and}~~\lim\limits_{n\rightarrow\infty}v(t_n)=-\infty
\end{equation}\par

If $\lim\limits_{t\rightarrow-\infty}v(t)=-\infty$, then  there exists $t_0<0$ such that $(v^2-1)^2+c> \frac{1}{4}v^4>0$ for all $t\leq t_0$. By Lemma \ref{lemidentity}, we have
\begin{equation}\label{e3.4}
v_t>\frac{v^2}{2}~\mbox{and}~(-v^{-1})_t>\frac{1}{2},~~\forall~t\leq t_0,
\end{equation}
which implies that
\begin{equation}\label{e3.5}
v^{-1}(t)-v^{-1}(t_0)>\frac{1}{2}(t_0-t),~~\forall~t<t_0.
\end{equation}
Letting $t\rightarrow -\infty$ in both sides of \eqref{e3.5}, by the fact that $\lim\limits_{t\rightarrow-\infty}v(t)=-\infty$ we conclude
\begin{equation}\label{e3.6}
-v^{-1}(t_0)=\lim\limits_{t\rightarrow-\infty}v^{-1}(t)-v^{-1}(t_0)\geq \lim\limits_{t\rightarrow-\infty}\frac{1}{2}(t_0-t)=\infty,
\end{equation}
which is a contradiction.  \par

Hence there exists another sequence $\{s_n\}_{n=1}^\infty$ such that
\begin{equation}\label{e3.7}
\lim\limits_{n\rightarrow\infty}s_n=-\infty,~~v'(s_n)=0,~~~\forall~n\in\mathbb{N}.
\end{equation}
and
\begin{equation}\label{e3.8}
\lim\limits_{n\rightarrow\infty}v(s_n)=-\infty.
\end{equation}
By Lemma \ref{lemidentity}, the sequence $\{v(s_n)\}_{n=1}^\infty$ is bounded, which contradicts with the limit \eqref{e3.8}. Hence $v$ and $u$ are bounded in $\mathbb{R}$. By Lemma \ref{lembdd}, we obtain this lemma.
\end{proof}

Using the boundness of $v$ given in Lemma \ref{lemboundforv}, we can give a bound for $c$ in \eqref{e3.2}.

\begin{lemma}\label{lemboundforc}
Let $c$ be the constant in \eqref{e3.2}. Then $c\in[-1, 0]$.
\end{lemma}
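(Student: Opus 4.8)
The plan is to exploit the identity $(v_t)^2=(v^2-1)^2+c$ from Lemma~\ref{lemidentity} together with the bound $|v(t)|\le 1$ from Lemma~\ref{lemboundforv}. First I would establish the lower bound $c\ge -1$. Since $|v|\le 1$ we have $(v^2-1)^2\le 1$, so evaluating \eqref{e3.2} at any point $t$ gives $c=(v_t)^2-(v^2-1)^2\ge (v_t)^2-1\ge -1$. This disposes of half the claim with essentially no work.

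For the upper bound $c\le 0$, I would argue by contradiction: suppose $c>0$. Then \eqref{e3.2} forces $(v_t)^2\ge c>0$ everywhere, so $v_t$ never vanishes and, being continuous, keeps a fixed sign; say $v_t>0$ on $\mathbb{R}$ (the other case is symmetric, or follows by replacing $t$ with $-t$). Thus $v$ is strictly increasing and bounded (by Lemma~\ref{lemboundforv}), so the limits $v(\pm\infty)=:L_\pm$ exist with $-1\le L_-<L_+\le 1$. Along a sequence $t_n\to+\infty$ one can use the equation \eqref{equationforv} (or elliptic/ODE estimates on unit intervals, as in the proof of Lemma~\ref{lembdd}) to conclude $v_t(t_n)\to 0$; passing to the limit in \eqref{e3.2} gives $0=(L_+^2-1)^2+c$, hence $c=-(L_+^2-1)^2\le 0$, contradicting $c>0$. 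The same argument at $-\infty$ works equally well.

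The one technical point to handle carefully is the claim that $v_t(t_n)\to 0$ along some sequence going to $+\infty$: this is where the main obstacle lies. A clean way is to note that $v$ is monotone and bounded, so $\int_0^\infty v_t\,dt=L_+-v(0)<\infty$ converges; hence $\liminf_{t\to+\infty}v_t(t)=0$, which already yields a sequence $t_n\to+\infty$ with $v_t(t_n)\to 0$, and that is all that is needed to pass to the limit in \eqref{e3.2} and extract $c=-(L_+^2-1)^2$. (Alternatively, since $v\to L_+$ and $v$ solves the autonomous ODE \eqref{equationforv}, the shifted functions $v(\cdot+t_n)$ converge in $C^2_{loc}$ to the constant $L_+$, forcing $v_t(t_n)\to 0$.) Either route avoids any delicate estimate and completes the proof that $c\in[-1,0]$.
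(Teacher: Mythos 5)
Your proposal is correct and rests on the same two ingredients as the paper's proof, namely the identity \eqref{e3.2} and the bound $|v|\leq 1$ from Lemma \ref{lemboundforv}: for $c\geq -1$ you read it off directly from $(v_t)^2\geq 0$ and $(v^2-1)^2\leq 1$ (the paper instead factors $(v^2-1)^2+c$ and derives a sign contradiction, but this is the same fact), and for $c\leq 0$ you, like the paper, note that $c>0$ forces $v_t$ to have a fixed sign and be bounded away from zero, which is incompatible with the boundedness of $v$. Your detour through the limits $L_{\pm}$ is unnecessary (once $\liminf_{t\to\infty}v_t=0$ you already contradict $(v_t)^2\geq c>0$), but it is not wrong.
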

\begin{proof}
Firstly, we show that $c\leq0$. Otherwise, we suppose that $c>0$ and $a=\sqrt{c}$. Then by \eqref{e3.2}, $(v_t)^2\geq a^2$ in $\mathbb{R}$. Hence
\begin{equation}\label{e3.9}
v_t\geq a~\mbox{or}~v_t\leq -a~~\mbox{in}~~\mathbb{R}.
\end{equation}
We may assume that $v_t\geq a$ in $\mathbb{R}$ since the proof for the case of $v_t\leq -a$ is similar. Then $\lim\limits_{t\rightarrow-\infty}v(t)=-\infty$. According to the proof of Lemma \ref{lemboundforv}, we obtain a contradiction. Hence $c\leq0$.\par

Secondly, we prove that $c\geq -1$. Otherwise, we assume that $c< -1$. Then by Lemma \ref{lemidentity}, we have
\begin{equation}\label{e3.10}
(v_t)^2=(v^2-b-1)(v^2+b-1)~~\mbox{in}~~\mathbb{R}
\end{equation}
with $b=\sqrt{-c}>1$. By Lemma \ref{lemboundforv},  $v^2-b-1<0$ in $\mathbb{R}$. Note that $v^2+b-1>0$ in $\mathbb{R}$, hence $(v^2-b-1)(v^2+b-1)<0$ in $\mathbb{R}$, which contradicts with equation \eqref{e3.10}. Hence  $c\geq -1$.
\end{proof}

Now we classify solutions of equation \eqref{equationforv} according the values of $c$ given in Lemma \ref{lemboundforc}.
\begin{lemma}\label{classification1}
Let $c$ be the constant in \eqref{e3.2}. If $c=-1$, then $v=0$ in $\mathbb{R}$.
\end{lemma}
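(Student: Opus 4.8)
The plan is to use the first-order identity \eqref{e3.2} with $c=-1$, which reads $(v_t)^2 = (v^2-1)^2 - 1 = v^2(v^2-2) = v^4 - 2v^2$. Since Lemma \ref{lemboundforv} gives $|v|\le 1$, we have $v^2-2 < 0$ wherever $v\neq 0$, so the right-hand side $v^2(v^2-2)$ is strictly negative at every point where $v\neq 0$. But $(v_t)^2\ge 0$ always, so the identity forces $v(t)^2(v(t)^2-2)\ge 0$ for all $t$, which combined with $v^2-2<0$ yields $v(t)^2 = 0$, i.e.\ $v\equiv 0$. This is the whole argument in one stroke.

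Concretely, the steps I would carry out are: (1) substitute $c=-1$ into \eqref{e3.2} and rewrite the right side as $v^2(v^2-2)$; (2) invoke Lemma \ref{lemboundforv} to get $|v(t)|\le 1 < \sqrt{2}$ for all $t\in\mathbb{R}$, hence $v(t)^2-2<0$; (3) observe that the left side $(v_t(t))^2$ is nonnegative, so we need $v(t)^2(v(t)^2-2)\ge 0$, which, given $v(t)^2-2<0$, is possible only if $v(t)^2=0$; (4) conclude $v(t)=0$ for every $t$, i.e.\ $v\equiv 0$ in $\mathbb{R}$.

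There is essentially no obstacle here — the only thing to be a little careful about is that Lemma \ref{lemboundforv} is applied before Lemma \ref{lemboundforc} in the logical order (it is, in the excerpt), so the bound $|v|\le 1$ is genuinely available. One could alternatively avoid even mentioning the bound by noting that if $v$ is ever $\ge\sqrt 2$ in absolute value one still has to analyze the ODE, but since the boundedness is already established this shortcut is unnecessary. So the main "work" is simply recognizing the sign contradiction, and the proof is a few lines.
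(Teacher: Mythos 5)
Your argument is correct and is essentially identical to the paper's proof: both substitute $c=-1$ into \eqref{e3.2} to get $(v_t)^2=v^2(v^2-2)$, invoke Lemma \ref{lemboundforv} to force $v^2-2<0$, and conclude from the sign of the left-hand side that $v\equiv 0$. Nothing further is needed.
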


\begin{proof} If $c=-1$, then by Lemma \ref{lemidentity} we have
\begin{equation}\label{e3.12}
(v_t)^2=v^2(v^2-2)~~\mbox{in}~\mathbb{R}.
\end{equation}
By Lemma \ref{lemboundforv}, $v^2-2<0$ in $\mathbb{R}$. Hence by \eqref{e3.12}, we deduce that $v=0$ in $\mathbb{R}$.\par


\end{proof}

Next we classify solutions of equation \eqref{equationforv} for $c \in (-1,0)$.
\begin{lemma}\label{classification2} If $c\in(-1,0)$, then  $v(t)$ is a period function in $\mathbb{R}$, and up to a translation, $v(t)$ can be characterized by its maximum value $M$ and minimum value $-M$ with $M\in(0,1)$.
\end{lemma}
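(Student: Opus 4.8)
The plan is to extract everything from the first-order identity \eqref{e3.2} combined with the a priori bound $|v|\le 1$ of Lemma \ref{lemboundforv}. Put $b:=\sqrt{-c}\in(0,1)$ and $M:=\sqrt{1-b}\in(0,1)$. Factoring the right-hand side of \eqref{e3.2},
\begin{equation}
(v_t)^2=(v^2-1)^2+c=(v^2-1-b)(v^2-1+b)=(2-M^2-v^2)(M^2-v^2)\quad\text{in }\mathbb{R}.\tag{$\star$}
\end{equation}
Since $|v|\le 1<\sqrt{2-M^2}$, the first factor on the right of $(\star)$ is $\ge 1-M^2>0$, so $(\star)$ forces $|v|\le M$ everywhere. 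Also $v$ is non-constant, because the only constant solutions of \eqref{equationforv} are $0$ and $\pm1$, corresponding to $c=-1$ and $c=0$, both excluded by hypothesis.

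Next I would show that $v_t$ must vanish somewhere. If $v_t\neq 0$ on $\mathbb{R}$, then $v_t$ has a fixed sign, so $v$ is strictly monotone; being bounded it tends to finite limits $L_\pm$ as $t\to\pm\infty$. By $(\star)$, $|v_t|\to\sqrt{(2-M^2-L_\pm^2)(M^2-L_\pm^2)}$; a nonzero limit would force $|v|\to\infty$, so the limit is $0$, hence (the first factor being positive) $L_\pm=\pm M$. But then $v_{tt}=-2v(1-v^2)\to\mp 2M(1-M^2)\neq 0$, which is incompatible with $v_t\to 0$. Thus there is $t_0$ with $v_t(t_0)=0$, and $(\star)$ gives $v(t_0)=\pm M$. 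Replacing $v$ by $-v$ (again a solution of \eqref{equationforv} with the same constant $c$) and translating $t$, I may assume $v(0)=M=\max v$; then $v_{tt}(0)=-2M(1-M^2)<0$.

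Now I would follow the solution forward from $t=0$. Since $0$ is a strict local maximum, $v$ strictly decreases on the maximal interval $(0,t_1)$ on which $v_t<0$, equivalently on which $v\in(-M,M)$ by $(\star)$. If $t_1=\infty$, then $v$ decreases to a limit $L\in[-M,M)$ and, exactly as in the previous paragraph, $|v_t|\to\sqrt{(2-M^2-L^2)(M^2-L^2)}$ must be $0$, forcing $L=-M$ and then $v_{tt}\to 2M(1-M^2)>0$, contradicting $v_t<0$. Hence $t_1<\infty$, and by continuity and maximality $v_t(t_1)=0$ with $v(t_1)=-M=\min v$. Separating variables in $(\star)$ over $[0,t_1]$ gives the finite value
\begin{equation*}
t_1=\int_{-M}^{M}\frac{\mathrm{d}\theta}{\sqrt{(2-M^2-\theta^2)(M^2-\theta^2)}},
\end{equation*}
the integrand being integrable at the endpoints $\pm M$ (it behaves like a constant times $|M\mp\theta|^{-1/2}$ there); this is the quantity appearing in \eqref{e1.8}.

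Finally I would invoke the time-reversibility of the autonomous second-order equation \eqref{equationforv} (which has no first-derivative term) together with uniqueness for the initial value problem: $s\mapsto v(t_1-s)$ and $s\mapsto v(t_1+s)$ both solve \eqref{equationforv} with the data $(-M,0)$ at $s=0$, hence coincide, so $v$ is even about $t_1$; likewise $v$ is even about $0$. Combining $v(-t)=v(t)$ with $v(2t_1-t)=v(t)$ yields $v(t+2t_1)=v(t)$, so $v$ is periodic of period $T=2t_1$; on $[0,t_1]$ it strictly decreases from $M$ to $-M$ and on $[t_1,2t_1]$ it strictly increases back, so $\max v=M$, $\min v=-M$ with $M\in(0,1)$, and the data $v(0)=M$, $v_t(0)=0$ determine $v$ uniquely, i.e. $v$ is determined up to translation by $M$ (equivalently by $c=-(1-M^2)^2$). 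The step I expect to be the main obstacle is the exclusion of a globally monotone ("heteroclinic'') profile running from $-M$ to $M$ at $\mp\infty$: this is precisely where it is essential that $\pm M$ are \emph{not} equilibria of \eqref{equationforv}, so that the would-be limit $v_t\to 0$ is inconsistent with $v_{tt}=-2v(1-v^2)\not\to 0$.
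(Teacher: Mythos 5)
Your proof is correct and follows essentially the same route as the paper: factor the first integral as $(v_t)^2=(2-M^2-v^2)(M^2-v^2)$, deduce $|v|\le M$ from the bound $|v|\le 1$, show the values $\pm M$ are attained (the paper rules out $|v|<M$ by integrating the resulting $\arcsin$ inequality, while you use the limit-at-infinity argument that the paper itself employs in its Step 2 — a cosmetic difference), and then obtain periodicity from the two reflection symmetries given by ODE uniqueness. The only piece of the paper's proof you omit is its Step 5, the converse existence statement that every $M\in(0,1)$ is realized by some solution, which the paper needs for the full "characterization" claim in Theorem \ref{thm1}(ii) but which follows immediately from solving the initial value problem with data $(M,0)$.
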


\begin{proof} We divide the proof into five steps.\par
$Step~1$. Define $M=\sqrt{1-\sqrt{-c}}\in(0,1)$. Then by Lemma \ref{lemidentity}, we have
\begin{equation}\label{e3.16}
(v_t)^2=[v^2-(2-M^2)](v^2-M^2)~~\mbox{in}~\mathbb{R}.
\end{equation}
By Lemma \ref{lemboundforv}, $v^2-(2-M^2)<0$ in $\mathbb{R}$. Hence by \eqref{e3.16}, we deduce that $v\in [-M, M]$ in $\mathbb{R}$ and
\begin{equation}\label{e3.17}
(v_t)^2\geq 2(1-M^2)(M^2-v^2)~~\mbox{in}~\mathbb{R}.
\end{equation}\par

If $|v|<M$ in $\mathbb{R}$, then by \eqref{e3.16} and \eqref{e3.17} we have
\begin{equation}\label{e3.18}
v_t\geq \sqrt{2(1-M^2)}\sqrt{M^2-v^2}~\mbox{or}~v_t\leq- \sqrt{2(1-M^2)}\sqrt{M^2-v^2}~\mbox{in}~\mathbb{R}.
\end{equation}
We only consider the case of $v_t\geq \sqrt{2(1-M^2)}\sqrt{M^2-v^2}$ in $\mathbb{R}$. By integrating, we have
\begin{equation}\label{e3.19}
\arcsin\frac{v(t)}{M}\geq\sqrt{2(1-M^2)}t+c_0~\mbox{in}~\mathbb{R}
\end{equation}
for some $c_0\in\mathbb{R}$, which is impossible. Hence there exists $t_0\in\mathbb{R}$ such that $|v(t_0)|=M$. Without loss of generality, we assume that that $t_0=0$ and $v(0)=M$.
\par

By \eqref{e3.16}, $v_t(0)=0$. Applying the classical ODE theory to equation \eqref{equationforv}, we deduce that
\begin{equation}\label{e3.20}
v(-t)=v(t),~~\forall t\in\mathbb{R}.
\end{equation}\par

$Step~2$. We show that there exists $t_1>0$ such that $v(t_1)=-M$. Otherwise, by Lemma \ref{lemidentity} and the fact that $v_{tt}(0)=-2M(1-M^2)<0$ we have
\begin{equation}\label{e3.21}
-M<v(t)< M,~~v_t< 0,~~\forall t>0.
\end{equation}
and
\begin{equation}\label{e3.22}
v_t\leq -\sqrt{2(1-M^2)}\sqrt{M^2-v^2},~~\forall t>0.
\end{equation}
By \eqref{e3.21}, there exists $\ell\in[-M,M)$ such that $\lim\limits_{t\rightarrow\infty}v(t)=\ell$. Then by \eqref{e3.22},
\begin{equation}\label{e3.23}
\limsup\limits_{t\rightarrow\infty}v_t\leq -\sqrt{2(1-M^2)}\sqrt{M^2-\ell^2},
\end{equation}
which and the fact that $\lim\limits_{t\rightarrow\infty}v(t)=\ell\in[-M,M)$ imply that $\ell=-M$. 
By equation \eqref{e3.16}, we deduce that
\begin{equation}\label{e3.24}
\lim\limits_{t\rightarrow\infty}v_t=0
\end{equation}
But by equation \eqref{equationforv}, we have
\begin{equation}\label{e3.25}
\lim\limits_{t\rightarrow\infty}v_{tt}=2M\big(1-M^2\big)>0,
\end{equation}
which contradicts equation \eqref{e3.24}. Thus there exists $t_1>0$ such that $v(t_1)=-M$.\par

Note that by \eqref{e3.16}, $v_t(t_1)=0$. By the classical ODE theory, we conclude from equation \eqref{equationforv} that
\begin{equation}\label{e3.26}
v(2t_1-t)=v(t),~~\forall t\in\mathbb{R}.
\end{equation}\par

$Step~3$. By \eqref{e3.20} and \eqref{e3.26}, we conclude that $v(t)$ is a period function in $\mathbb{R}$ with maximum value $M$, minimum value $-M$ and $M\in(0,1)$. \par

$Step~4$. Suppose that $\bar{v}(t)$ is another period function in $\mathbb{R}$ with maximum value $M$, minimum value $-M$ and $M\in(0,1)$. Then there exists $t_2\in\mathbb{R}$ with $\bar{v}(t_2)=M$ and $\bar{v}_t(t_2)=0$.  Also applying the classical ODE theory to equation \eqref{equationforv}, we deduce that
\begin{equation}\label{e3.27}
\bar{v}(t)=v(t+t_2),~~\forall t\in\mathbb{R},
\end{equation}
which implies that $\bar{v}$ is a translation of $v$.\par

$Step~5$.  For any $M\in(0,1)$, consider the problem \eqref{equationforv} with initial values $ v(0)=M$ and $v_t(0)=0$. Then by \eqref{e3.2} with $c=-(1-M^2)^2$, \eqref{e3.16} and $Step~2$, we deduce that the solution $v(t)$ of equation \eqref{equationforv} exists globally in  $\mathbb{R}$ which is a period function with maximum value $M$ and minimum value $-M$.\par

By $Step~3$, $Step~4$ and $Step~5$, we prove this lemma.
\end{proof}

\begin{remark}\label{rmkperiod} According to the proof of Lemma \ref{classification2}, we know that the period $T$ of $v$ is given by
\begin{equation}\label{e3.28}
\begin{aligned}
T&=-2\int_{0}^{t_1}\frac{\mathrm{d}v(t)}{\sqrt{[v^2(t)-(2-M^2)](v^2(t)-M^2)}}\\
&=2\int_{-M}^M\frac{\mathrm{d}\theta}{\sqrt{(2-M^2-\theta^2)(M^2-\theta^2)}}.
\end{aligned}
\end{equation}
\end{remark}

Now, we turn to the proof of Theorem \ref{thm1}.\par
{\bf Proof~of~Theorem \ref{thm1}:}\par
 (1) If $u$ is continuous at the origin, then by Lemma \ref{lemzero} we know that $u(0)=0$ or $u(0)=\pm1$. Hence $\lim\limits_{t\rightarrow-\infty}v(t)=0$ or $\lim\limits_{t\rightarrow-\infty}v(t)=\pm1$. By classical ODE theory, we know that $\lim\limits_{t\rightarrow-\infty}v_t(t)=0$. Therefore, by Lemma \ref{lemidentity} we have
\begin{equation}\label{e3.28}
c=\lim\limits_{t\rightarrow-\infty}[(v_t)^2-(v^2-1)^2]=-1~\mbox{or}~c=\lim\limits_{t\rightarrow-\infty}[(v_t)^2-(v^2-1)^2]=0.
\end{equation}
Then by Lemma \ref{classification1} and Remark \ref{rmkfiniteenergy}, we deduce that there exists $a\in \mathbb{R}$ such that

\begin{equation}\label{e3.29}
v\equiv0~~\mbox{or}~~v(x)=\pm\frac{a^2-e^{2r}}{a^2+e^{2r}}~~\mbox{in}~~\mathbb{R},
\end{equation}
which implies that
\begin{equation}\label{e3.30}
\mbox{either }~~u\equiv0~~\mbox{or}~~u(x)=\pm\frac{a^2-|x|^2}{a^2+|x|^2}~~\mbox{in}~~\mathbb{R}^2
\end{equation}
with $a\in \mathbb{R}$.\par

(2) If $u$ is not continuous at the origin, then by Lemma \ref{lemboundforc} we know that $c\in(-1,0)$.  Otherwise, $c=-1$ or $c=0$. Then we conclude from the previous arguments that $u$ must be  continuous at the origin, which is a contradiction. Hence by Lemma \ref{classification2} and Remark \ref{rmkperiod}, we also obtain this theorem.\qed

\section{Classification of bounded classical solutions}
\numberwithin{equation}{section}
 \setcounter{equation}{0}
\quad\ \; In order to classify the bounded classical solutions, we first study equation \eqref{e2.8}, and renumber it as follows:
\begin{equation}\label{e4.1}
-\partial_{tt}v-\partial_{\theta\theta}v=2v\big(1-v^2\big)
\end{equation}
with $v(t,\theta)=v(t,\theta+2\pi)$ for any $\theta, t\in \mathbb{R}$.
\begin{proposition}\label{propmoving}
Suppose that $v \in C^2\big(\mathbb{R}^2\big)$ is a solution to the equation \eqref{e4.1}. If $\lim\limits_{t\rightarrow\infty}v(t,\theta)=1$ uniformly for $\theta\in \mathbb{R}$, $v(t,\theta)=v(t, \theta+2\pi)$ and $|v(t,\theta)|\leq 1$ for any $(t,\theta)\in \mathbb{R}^2$, then
\begin{description}
  \item (i) $v_t(t,\theta)>0$ for any $(t,\theta)\in \mathbb{R}^2$ or
  \item (ii) $\exists \Lambda \in \mathbb{R}$ such that
  \begin{equation}\label{e4.2}
  v(t, \theta)=v(2\Lambda-t, \theta), ~~\forall(t,\theta)\in \mathbb{R}^2.
  \end{equation}
\end{description}
\end{proposition}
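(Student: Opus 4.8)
The plan is to apply the method of moving planes in the variable $t$, exploiting that $v$ is defined on the whole cylinder $\mathbb{R}\times(\mathbb{R}/2\pi\mathbb{Z})$, is bounded by $1$ in absolute value, and converges to the nondegenerate stable equilibrium $v=1$ as $t\to+\infty$. For $\lambda\in\mathbb{R}$ define the reflection $v^\lambda(t,\theta)=v(2\lambda-t,\theta)$ and the difference $w^\lambda(t,\theta)=v^\lambda(t,\theta)-v(t,\theta)$ on the half-cylinder $\Sigma_\lambda=\{t<\lambda\}$. Since $v$ and $v^\lambda$ both solve \eqref{e4.1}, $w^\lambda$ satisfies a linear equation $-\Delta w^\lambda=c^\lambda(t,\theta)w^\lambda$ with $c^\lambda=2(1-v^2-vv^\lambda-(v^\lambda)^2)$ bounded, and on $\partial\Sigma_\lambda=\{t=\lambda\}$ we have $w^\lambda=0$. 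The goal is to show $w^\lambda\ge 0$ in $\Sigma_\lambda$ for $\lambda$ sufficiently negative, then slide $\lambda$ rightward.

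The key steps, in order, are as follows. First, I would establish that the moving-plane procedure can be started: because $v(t,\theta)\to 1$ uniformly as $t\to+\infty$ and $|v|\le 1$, for $\lambda$ very negative the reflected point $2\lambda-t$ with $t<\lambda$ lies far to the right where $v^\lambda$ is close to $1$, while $v$ at $(t,\theta)$ is $\le 1$; but this alone is not quite enough, so the honest starting step uses a narrow-region / maximum-principle argument together with the fact that $1$ is a strict upper barrier — here one typically combines $w^\lambda\ge 0$ near $t=\lambda$ (from $w^\lambda=0$ on the boundary plus a sign condition on $w^\lambda_t$) with the decay at $-\infty$. Concretely, I would argue that $w^\lambda$ cannot have a negative infimum: if $\inf_{\Sigma_\lambda}w^\lambda<0$ it is attained (using that $w^\lambda\to 1-\lim v\ge 0$ behavior is controlled at the ends, one end by the limit $v\to 1$, the other by boundedness plus an elliptic-compactness blow-down argument as in Lemma \ref{lembdd}), and at an interior negative minimum the linear equation forces $c^\lambda\le 0$ there, which combined with $|v^\lambda|\le 1$, $v^\lambda<v$ gives the needed contradiction; periodicity in $\theta$ removes any escape of the minimum in that direction. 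Second, I would define $\Lambda=\sup\{\lambda: w^\mu\ge 0 \text{ in }\Sigma_\mu \text{ for all }\mu\le\lambda\}$. If $\Lambda=+\infty$, then letting $\lambda\to+\infty$ shows $v$ is nondecreasing in $t$, and a strong maximum principle applied to $w^\lambda$ (which is $\ge 0$, solves the linear equation, and is not identically zero since $v\not\equiv 1$) upgrades this to $v_t>0$ everywhere, giving alternative (i). Third, if $\Lambda<+\infty$, then by continuity $w^\Lambda\ge 0$ in $\Sigma_\Lambda$; by the strong maximum principle either $w^\Lambda\equiv 0$, which is exactly \eqref{e4.2}, or $w^\Lambda>0$ in the interior with $w^\Lambda_t<0$ on $\{t=\Lambda\}$ by Hopf — and in the latter case the usual argument (narrow region near $t=\Lambda$ plus the controlled behavior at the ends) shows $w^{\Lambda+\epsilon}\ge 0$ in $\Sigma_{\Lambda+\epsilon}$ for small $\epsilon>0$, contradicting maximality. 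Hence alternative (ii) holds.

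The main obstacle I anticipate is the behavior of $w^\lambda$ at the "bad" end $t\to-\infty$: unlike the classical moving-plane setup on $\mathbb{R}^N$ where one has decay of the solution itself, here $v$ need not converge as $t\to-\infty$ (it could oscillate, cf.\ the periodic solutions of Lemma \ref{classification2}), so one cannot directly claim the negative infimum of $w^\lambda$ is attained. The fix is a translation-and-compactness argument: if there is a sequence $(t_n,\theta_n)$ with $t_n\to-\infty$ along which $w^\lambda$ approaches its negative infimum, translate $v_n(t,\theta)=v(t+t_n,\theta)$, extract a $C^2_{loc}$-limit $v_0$ (a global bounded solution on the cylinder, still $\le 1$ in absolute value), and observe that $\widetilde w=v_0(2\lambda-2t_n+\cdots)-v_0$ — more carefully, the limit of the reflected pair — attains its negative minimum at an interior point, where the maximum principle gives the contradiction; the right end $t\to+\infty$ is handled more easily because there $v^\lambda\to 1$ and $v\le 1$ force $w^\lambda\ge 0$ outright. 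A secondary technical point is verifying the sign condition $w^\lambda_t\le 0$ on $\{t=\lambda\}$ needed to start the slide and to run the Hopf step; this follows from $w^\lambda(t=\lambda)=0$ together with the established inequality $w^\lambda\ge 0$ just to the left, via Hopf's lemma, once the base case is in hand.
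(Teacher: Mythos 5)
Your overall moving-plane architecture (reflect across $\{t=\lambda\}$, define a critical position $\Lambda$, read off monotonicity if the plane never stops and symmetry if it does) matches the paper's, but the plane is started from the wrong end, and this breaks the proof at its first step. You begin with $\lambda\to-\infty$ and compare $v$ with its reflection on $\{t<\lambda\}$; in that region nothing whatsoever is known about $v$ beyond $|v|\le 1$, and in particular $v$ may take values near $0$, where the nonlinearity $f(v)=2v(1-v^2)$ satisfies $f'(v)=2(1-3v^2)>0$ --- the bad sign for the maximum principle on an infinite half-cylinder. Your proposed mechanism for the base case is also computed with the wrong sign: at an interior negative minimum of $w^\lambda=v^\lambda-v$ one has $\Delta w^\lambda\ge 0$, hence $c^\lambda w^\lambda=-\Delta w^\lambda\le 0$ and therefore $c^\lambda\ge 0$ (not $\le 0$), i.e.\ $v^2+vv^\lambda+(v^\lambda)^2\le 1$, which is perfectly consistent with $|v|,|v^\lambda|\le 1$ and $v^\lambda<v$ (take both values near $0$). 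So no contradiction is produced and the start of the slide is not justified. Worse, the base case is not merely unproven but false in general: if $v$ were symmetric about some plane $t=\Lambda_0$ and decreasing on $(-\infty,\Lambda_0)$ (exactly the alternative (ii) the proposition must allow for), then for any $\lambda<\Lambda_0$ and $t$ slightly below $\lambda$ both $t$ and $2\lambda-t$ lie in the decreasing region and $v(2\lambda-t)<v(t)$, so $w^\lambda<0$ somewhere in $\Sigma_\lambda$ for \emph{every} $\lambda<\Lambda_0$. Your set $\{\lambda:\ w^\mu\ge 0\ \forall\mu\le\lambda\}$ would then be empty and the whole scheme returns nothing. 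Your compactness ``fix'' at $t\to-\infty$ addresses a non-issue (in your setup the reflected point goes to $+\infty$, where $v^\lambda\to 1\ge v$, so the liminf there is automatically $\ge 0$); the real obstruction is the interior one just described.

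The missing idea is to exploit the hypothesis $\lim_{t\to\infty}v=1$ \emph{quantitatively} and start the plane at $+\infty$, moving it leftward. Choose $\lambda_0$ with $v>1/\sqrt{3}$ on $\{t>\lambda_0\}$ and, for $\lambda>\lambda_0$, compare $v$ with $v^\lambda$ on $\Sigma_\lambda=\{t>\lambda\}$: on the set where $w^\lambda=v-v^\lambda<0$ one has $v^\lambda>v>1/\sqrt{3}$, so the mean-value coefficient $2(3(\xi^\lambda)^2-1)$ is \emph{positive} there and the maximum principle applies on the negative set (whose boundary data and behavior as $t\to+\infty$ are controlled), forcing that set to be empty. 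One then takes $\Lambda=\inf\{\lambda:\ v\ge v^\lambda\ \text{in}\ \Sigma_\lambda\}$ and handles the critical position with a compactness argument confined to the compact slab $[\Lambda,\lambda_0]$ plus the good-sign region $\{t>\lambda_0\}$. That is the paper's route, and some device of this kind --- restricting the maximum principle to a region where $|v|>1/\sqrt{3}$ on the relevant set --- is indispensable; without it the linearized operator simply does not obey a maximum principle on the unbounded domains you are using.
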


We will use the methods of moving planes to prove Proposition \ref{propmoving}. Before we give the proof, we need some notations.\par

For any $\lambda \in \mathbb{R}$, we define (see \cite{CL})
\begin{equation}\label{e4.3}
\begin{aligned}
&\Sigma_\lambda=\big\{(t,\theta): t>\lambda,~\theta\in \mathbb{R}\big\},~~&T_\lambda=\big\{(t,\theta): t=\lambda,~\theta\in \mathbb{R}\big\},\\
&v^\lambda(t,\theta)=v(2\lambda-t,\theta),~~~~&w^\lambda(t,\theta)=v(t,\theta)-v^\lambda(t,\theta).
\end{aligned}
\end{equation}
Then
\begin{equation}\label{e4.4}
w^\lambda(t,\theta)=0,~~\forall (t,\theta)\in T_\lambda.
\end{equation}
By direct calculations, we have
\begin{equation}\label{e4.5}
-\Delta v^\lambda=2v^\lambda\big(1-(v^\lambda)^2\big)
\end{equation}
and
\begin{equation}\label{e4.6}
-\Delta w^\lambda=-\Delta(v-v^\lambda)=2\big(1-3(\xi^\lambda)^2\big)w^\lambda,
\end{equation}
where $\xi^\lambda=\xi^\lambda(t,\theta)$ is between $v(t,\theta)$ and $v^\lambda(t,\theta)$.
\par

Let
\begin{equation}\label{e4.7}
\Sigma_\lambda^-=\Sigma_\lambda\cap \big\{w^\lambda<0\big\}.
\end{equation}
Since $\lim\limits_{t\rightarrow\infty}v(t,\theta)=1$ uniformly for $\theta\in \mathbb{R}$ and $|v(t,\theta)|\leq 1$ for any $(t,\theta)\in \mathbb{R}^2$, we deduce that
\begin{equation}\label{e4.8}
\varliminf \limits_{t\rightarrow \infty} w^\lambda(t,\theta)\geq 0,~~\forall \theta\in \mathbb{R}
\end{equation}
and there exists $\lambda_0\gg 1$ such that
\begin{equation}\label{e4.9}
v(t, \theta)>\frac{1}{\sqrt{3}},~~~~\forall \theta\in \mathbb{R}, ~\forall t>\lambda_0.
\end{equation}
Hence
\begin{equation}\label{e4.10}
\frac{1}{\sqrt{3}}<v<\xi^\lambda<v^\lambda,~~~3\xi^2-1>0,~~~~\forall (t, \theta)\in \Sigma_\lambda^-, ~\forall t>\lambda_0.
\end{equation}

\begin{lemma}\label{lemstart}
$v\geq v^\lambda$ in $\Sigma_{\lambda}$ for any $\lambda>\lambda_0$.
\end{lemma}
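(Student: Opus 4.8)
The plan is to use the standard "moving planes starting at infinity" argument adapted to the periodic-in-$\theta$ strip geometry. Fix $\lambda > \lambda_0$ and set $\Sigma_\lambda^- = \Sigma_\lambda \cap \{w^\lambda < 0\}$ as in \eqref{e4.7}; the goal is to show $\Sigma_\lambda^-$ is empty. First I would record that on $\Sigma_\lambda^-$ we have, by \eqref{e4.10} applied with $\lambda > \lambda_0$, the sign condition $3(\xi^\lambda)^2 - 1 > 0$, so equation \eqref{e4.6} reads $-\Delta w^\lambda = 2(1 - 3(\xi^\lambda)^2) w^\lambda$ with the zeroth-order coefficient $2(1-3(\xi^\lambda)^2)$ \emph{negative} on $\Sigma_\lambda^-$. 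Thus $w^\lambda$ satisfies $-\Delta w^\lambda + c^\lambda(t,\theta)\, w^\lambda = 0$ on $\Sigma_\lambda^-$ with $c^\lambda = 2(3(\xi^\lambda)^2 - 1) > 0$ there, i.e.\ a linear equation with a good-signed potential, on which the maximum principle operates in our favor.

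Next I would verify the boundary behavior of $w^\lambda$ on $\partial \Sigma_\lambda^-$. On $T_\lambda$ we have $w^\lambda = 0$ by \eqref{e4.4}, hence $w^\lambda \geq 0$ on the part of $\partial \Sigma_\lambda^-$ lying on $T_\lambda$. On the part of $\partial \Sigma_\lambda^-$ interior to $\Sigma_\lambda$, we have $w^\lambda = 0$ by definition of $\Sigma_\lambda^-$ (the set $\{w^\lambda < 0\}$ is open, so its relative boundary inside $\Sigma_\lambda$ lies in $\{w^\lambda = 0\}$). As $t \to \infty$, \eqref{e4.8} gives $\varliminf_{t\to\infty} w^\lambda(t,\theta) \geq 0$ uniformly. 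The $\theta$-direction needs no boundary condition because of $2\pi$-periodicity: we work on the quotient strip $\mathbb{R}_{>\lambda} \times (\mathbb{R}/2\pi\mathbb{Z})$, which has no lateral boundary. So on this quotient domain, $w^\lambda \geq 0$ on the whole boundary $T_\lambda$ and in the limit $t\to\infty$, while $w^\lambda < 0$ on $\Sigma_\lambda^-$.

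Now I would derive a contradiction from $\Sigma_\lambda^- \neq \emptyset$. Since $|w^\lambda| \leq 2$ is bounded and $w^\lambda \to$ nonnegative-liminf at $t=\infty$ while $w^\lambda = 0$ on the finite boundary $T_\lambda$, if $\Sigma_\lambda^-$ were nonempty then $\inf_{\Sigma_\lambda^-} w^\lambda =: -m < 0$ is attained (or attained in a limiting sense) at some interior point, or along a sequence; the uniform limit at $t\to\infty$ and the Dirichlet data $=0$ on $T_\lambda$ force a genuine interior negative minimum on a compact piece after passing to a translated limit in $t$ if necessary (using the local $C^2$ convergence from elliptic theory, exactly as in the proof of Lemma \ref{lembdd}). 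At such an interior minimum point of $w^\lambda$ where $w^\lambda < 0$, we have $-\Delta w^\lambda \leq 0$ but $2(1-3(\xi^\lambda)^2) w^\lambda > 0$ (product of negative coefficient and negative value), contradicting \eqref{e4.6}. Hence $\Sigma_\lambda^- = \emptyset$, i.e.\ $v \geq v^\lambda$ in $\Sigma_\lambda$.

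The main obstacle I anticipate is the non-compactness of $\Sigma_\lambda$ in the $t$-direction: the negative minimum of $w^\lambda$ on $\Sigma_\lambda^-$ may only be approached along a sequence $t_n \to \infty$, not attained. The resolution, as in Lemma \ref{lembdd}, is to translate $v_n(t,\theta) = v(t+t_n,\theta)$, extract a locally $C^2$-convergent subsequence $v_n \to v_0$ solving \eqref{e4.1}; but then the uniform limit $v(t,\theta)\to 1$ as $t\to\infty$ forces $v_0 \equiv 1$ and correspondingly $v_0^\lambda \equiv 1$, so $w_0^\lambda \equiv 0$, contradicting that the limiting infimum is strictly negative. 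One must be slightly careful that the translation keeps $\lambda > \lambda_0$ (it does, since translating only moves things further into the region $t > \lambda_0$ where \eqref{e4.9}–\eqref{e4.10} hold), and that \eqref{e4.8} is used in its uniform form so the liminf survives the translation. This caps off the argument and establishes the lemma, which then serves as the starting point ($\lambda$ large) for the moving-plane sweep toward decreasing $\lambda$ in the proof of Proposition \ref{propmoving}.
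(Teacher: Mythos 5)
Your proof follows essentially the same route as the paper: apply the maximum principle to $w^\lambda$ on $\Sigma_\lambda^-$, where the potential $2\big(3(\xi^\lambda)^2-1\big)$ is positive by \eqref{e4.9}--\eqref{e4.10}, and use the boundary data \eqref{e4.4} together with the liminf condition \eqref{e4.8} to force $\Sigma_\lambda^-=\emptyset$; the paper states this tersely while you spell out the compactness/translation justification on the unbounded strip. One small slip worth noting: in your limiting argument $v^\lambda(t+t_n,\theta)=v(2\lambda-t-t_n,\theta)$ need \emph{not} tend to $1$ as $t_n\to\infty$ (the reflected points run off to $-\infty$), so the claim $v_0^\lambda\equiv 1$ is unjustified, but since $v^\lambda\le 1$ everywhere you still have $w^\lambda\ge v-1\to 0$ along such a sequence, which is all the contradiction requires.
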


\begin{proof}
By \eqref{e4.4}, \eqref{e4.6} and \eqref{e4.8}, we conclude that $w^\lambda$ solves the equation
\begin{equation}\label{e4.11}
\left\{
\begin{aligned}
    -\Delta w^\lambda+2\big(3(\xi^\lambda)^2-1\big)w^\lambda&=0~~&\mbox{in}&~\Sigma_\lambda^-,\\
     w^\lambda(t, \theta+2\pi)&=w^\lambda(t, \theta),&\forall&(t,\theta)\in \Sigma_\lambda^-,\\
     w^\lambda(t,\theta)&= 0,~~&\forall& (t,\theta)\in \partial \Sigma_\lambda^-,\\
     \varliminf \limits_{(t,\theta)\in\Sigma_\lambda^-,~t\rightarrow \infty} w^\lambda(t,\theta)&\geq 0,\\
\end{aligned}
\right.
\end{equation}\par

Since $\lambda>\lambda_0$, by \eqref{e4.10} and Maximum principle we conclude that
\begin{equation*}
w^\lambda\geq0~~~\mbox{in}~ \Sigma_\lambda^-.
\end{equation*}
Note that $w^\lambda<0$ in $\Sigma_\lambda^-$, hence
\begin{equation*}
\Sigma_\lambda^-=\emptyset,
\end{equation*}
which implies that
\begin{equation}\label{e4.12}
w^\lambda\geq0~~~\mbox{and}~~v\geq v^\lambda~~~\mbox{in}~ \Sigma_\lambda
\end{equation}
for any $\lambda>\lambda_0$.
\end{proof}

Similarly to \cite{CL}, we define
\begin{equation}\label{e4.13}
\Lambda=\inf\big\{\lambda\in \mathbb{R}|v\geq v^\lambda~\mbox{in~}\Sigma_\lambda\big\}.
\end{equation}
By Lemma \ref{lemstart}, we have
\begin{equation*}
\Lambda\in \big[-\infty, \lambda_0\big].
\end{equation*}

\begin{lemma}\label{lemcom}
If $\Lambda \in \mathbb{R}$, then $w^\Lambda\equiv0$ in $\Sigma_\Lambda$.
\end{lemma}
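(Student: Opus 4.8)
The plan is to run the standard ``moving planes'' continuity argument: having fixed $\Lambda$ as the infimum of admissible $\lambda$, I want to show that the supersolution inequality $v\ge v^\Lambda$ in $\Sigma_\Lambda$ cannot be strict everywhere, i.e.\ that $w^\Lambda\equiv 0$. First I would record that $w^\Lambda\ge 0$ in $\Sigma_\Lambda$ by continuity in $\lambda$ (passing to the limit in $v\ge v^\lambda$ as $\lambda\downarrow\Lambda$), and that $w^\Lambda$ satisfies a linear elliptic equation $-\Delta w^\Lambda+2(3(\xi^\Lambda)^2-1)w^\Lambda=0$ in $\Sigma_\Lambda$ with $w^\Lambda=0$ on $T_\Lambda$ and $\varliminf_{t\to\infty}w^\Lambda\ge 0$, exactly as in \eqref{e4.6} and \eqref{e4.8}. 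Suppose for contradiction that $w^\Lambda\not\equiv 0$. Then $w^\Lambda>0$ somewhere in $\Sigma_\Lambda$, and since the zeroth-order coefficient need not have a sign on all of $\Sigma_\Lambda$ I would first restrict attention to the region $\{t>\lambda_0\}$, where by \eqref{e4.10} the coefficient $2(3(\xi^\Lambda)^2-1)$ is positive, and to the strip $\{\Lambda<t\le\lambda_0\}$ separately.

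The core of the argument is to produce, for $\lambda$ slightly below $\Lambda$, the inclusion $\Sigma_\lambda^-=\emptyset$, contradicting the definition of $\Lambda$ as an infimum. The mechanism is: (a) by the strong maximum principle applied to $w^\Lambda\ge 0$ on the connected region $\Sigma_\Lambda$ (working componentwise on the part where the coefficient is nonnegative, and using Hopf's lemma on $T_\Lambda$), either $w^\Lambda\equiv 0$ or $w^\Lambda>0$ in the interior with $\partial_t w^\Lambda<0$ on $T_\Lambda$ (equivalently $v_t(\Lambda,\theta)>0$); (b) using $\partial_t w^\Lambda<0$ on the compact-in-$\theta$ set $T_\Lambda$ (periodicity makes $\theta$ effectively compact) together with the uniform limit $v(t,\theta)\to 1$ as $t\to\infty$, one shows $w^\lambda\ge 0$ persists on $\Sigma_\lambda$ for $\lambda$ in a left-neighbourhood of $\Lambda$. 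Concretely, near $T_\Lambda$ the strict sign of $\partial_t w^\Lambda$ and continuity in $\lambda$ keep $w^\lambda\ge 0$ on a slab $\{\Lambda-\delta<t<\Lambda+s_0\}$; on the far region $\{t>\lambda_0\}$ the positivity of the zeroth-order coefficient plus \eqref{e4.8} gives $\Sigma_\lambda^-=\emptyset$ there by the maximum principle as in Lemma \ref{lemstart}; and on the remaining compact-in-$t$ middle piece $\{\Lambda+s_0\le t\le\lambda_0\}$ one uses that $w^\Lambda$ is bounded below by a positive constant there (by compactness and $w^\Lambda>0$), so a small perturbation in $\lambda$ keeps $w^\lambda>0$. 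Combining the three pieces yields $w^\lambda\ge 0$ on all of $\Sigma_\lambda$ for $\lambda<\Lambda$ close to $\Lambda$, contradicting minimality of $\Lambda$.

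Therefore $w^\Lambda\equiv 0$ in $\Sigma_\Lambda$, which is the claim. (I expect this also to be the place where one reads off the dichotomy of Proposition \ref{propmoving}: if $\Lambda=-\infty$ one gets $v\ge v^\lambda$ for all $\lambda$, hence monotonicity $v_t>0$ after letting $\lambda\to\pm\infty$ appropriately; if $\Lambda\in\mathbb{R}$, Lemma \ref{lemcom} gives the reflection symmetry \eqref{e4.2}.)

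The main obstacle, and the step needing the most care, is handling the ``middle'' region where the zeroth-order coefficient $2(3(\xi^\Lambda)^2-1)$ is not known to be nonnegative: the naive maximum principle fails there, so I must genuinely exploit that $T_\Lambda$ and the $\theta$-direction are compact (via $2\pi$-periodicity) and that $w^\Lambda$ is therefore uniformly bounded away from $0$ on any compact $t$-slab away from $T_\Lambda$, so that only a uniformly small perturbation of $\lambda$ is needed to preserve positivity. Equivalently, one can phrase this via a narrow-region / small-measure argument for $\Sigma_\lambda^-$, but the cleanest route is the compactness one just described, since the problem is effectively posed on the cylinder $\mathbb{R}\times(\mathbb{R}/2\pi\mathbb{Z})$. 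The Hopf lemma input on $T_\Lambda$ (giving the strict sign of $\partial_t w^\Lambda$) also requires that $w^\Lambda$ solve an equation with a \emph{bounded} zeroth-order coefficient near $T_\Lambda$, which holds since $|v|\le 1$ makes $\xi^\Lambda$ bounded.
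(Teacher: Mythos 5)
Your proposal is correct and follows essentially the same route as the paper: the standard moving-planes continuation with the same three ingredients (Hopf's lemma on $T_\Lambda$, a uniform positive lower bound for $w^\Lambda$ on a compact middle slab supplied by the $2\pi$-periodicity in $\theta$, and the maximum principle with positive zeroth-order coefficient for $t>\lambda_0$), the paper merely packaging the final contradiction via the negative interior minima of $w^{\Lambda-1/n}$ converging to a point of $T_\Lambda$ where $\nabla w^\Lambda$ would have to vanish, rather than by directly propagating $w^\lambda\ge 0$ to $\lambda$ slightly below $\Lambda$ as you do. One minor sign slip: Hopf's lemma gives $\partial_t w^\Lambda>0$ on $T_\Lambda$ (the outward normal of $\Sigma_\Lambda$ points in the $-t$ direction), consistent with your own parenthetical $v_t(\Lambda,\theta)>0$, not $\partial_t w^\Lambda<0$.
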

\begin{proof}
Since $v$ is continuous, $w^\Lambda$ solves the equation
\begin{equation}\label{e4.14}
\left\{
\begin{aligned}
    -\Delta w^\Lambda+2\big(3\xi^2-1\big)w^\Lambda&=0~~&\mbox{in}&~\Sigma_\Lambda,\\
     w^\Lambda&\geq0,~~&\mbox{in}&~\Sigma_\Lambda,\\
     w^\Lambda&=0,~~&\mbox{on}&~T_\Lambda.
\end{aligned}
\right.
\end{equation}
By Maximum principle, there are two cases occur:\par
(i) $w^\Lambda\equiv0$ in $\Sigma_\Lambda$ or \par
(ii) $w^\Lambda>0$ in $\Sigma_\Lambda$ and $\frac{\partial w^\Lambda}{\partial t}>0$ on $T_\Lambda$. \\
Hence in order to prove this lemma, we need to exclude case (ii). \par

In the following, we assume that case (ii) occurs and want to obtain a contrdiction. 
By the definition of $\Lambda$, for any $n\in \mathbb{N}^+$ we have
$$\inf \limits_{\Sigma_n} w_n<0,$$
where $\Sigma_n=\Sigma_{\Lambda-\frac{1}{n}}$ and $w_n=w^{\Lambda-\frac{1}{n}}$. Since $w_n=0$ on $T_n:=T_{\Lambda-\frac{1}{n}}$, $\varliminf \limits_{t\rightarrow \infty} w_n(t,\theta)\geq 0$ for any $\theta\in \mathbb{R}$, and $w_n(t,\theta)=w_n(t, \theta+2\pi)$ for any $(t,\theta)\in \mathbb{R}^2$, there exists $(t_n, \theta_n)\in \Sigma_n$ with $\theta_n\in [0,2\pi]$ such that
\begin{equation}\label{e4.15}
w_n(t_n, \theta_n)=\inf \limits_{\Sigma_n} w_n<0~~\mbox{and}~~\nabla w_n(t_n, \theta_n)=(0,0).
\end{equation}

Note that $w^\Lambda>0$ in $\Sigma_\Lambda$ and $w^\Lambda(t,\theta)=w^\Lambda(t, \theta+2\pi)$ for any $(t,\theta)\in \mathbb{R}^2$, then $\exists\delta_0>0$ such that
\begin{equation}\label{e4.16}
w^\Lambda>\delta_0~~\mbox{in}~~\Sigma_{\Lambda+1}\backslash\Sigma_{\lambda_0},
\end{equation}
where $\lambda_0$ is given in \eqref{e4.9}. Hence for $n\gg 1$,  we have
\begin{equation}\label{e4.17}
w_n>\delta_0~~\mbox{in}~~\Sigma_{\Lambda+1}\backslash\Sigma_{\lambda_0}.
\end{equation}\par

By \eqref{e4.6}, \eqref{e4.8},  \eqref{e4.10} and \eqref{e4.17}, we conclude that for any $n\gg1$, $w_n$ solves the equation
\begin{equation}\label{e4.18}
\left\{
\begin{aligned}
    -\Delta w_n+2\big(3\xi^2-1\big)w_n&=0~~&\mbox{in}&~\Sigma_n^-,\\
     w_n(t, \theta+2\pi)&=w_n(t, \theta),&\forall&(t,\theta)\in \Sigma_n^-,\\
     w_n(t,\theta)&\geq 0,~~&\forall& (t,\theta)\in \partial \Sigma_n^-,\\
     \varliminf \limits_{(t,\theta)\in\Sigma_n^-,~t\rightarrow \infty} w_n(t,\theta)&\geq 0,\\
\end{aligned}
\right.
\end{equation}
with $\Sigma_n^-=\Sigma_{\lambda_0}\cap\{w_n<0\}$ and $3\xi^2-1\geq 3v^2-1>0$ in $\Sigma_n^-$. By  Maximum principle and the fact that $w_n<0$ in $\Sigma_n^-$, we conclude that
\begin{equation}\label{e4.19}
\Sigma_n^-=\emptyset ~~\mbox{and}~~w_n\geq0~~~\mbox{in}~ \Sigma_{\lambda_0}.
\end{equation}
\par

By \eqref{e4.17} and \eqref{e4.19}, we have
\begin{equation}\label{e4.20}
w_n\geq0~~\mbox{in}~~\Sigma_{\Lambda+1}
\end{equation}
for any $n\gg 1$. Thus we conclude from \eqref{e4.15} and \eqref{e4.20} that $t_n\in \big[\Lambda, \Lambda+1\big]$ for $n\gg 1$ and $\big\{(t_n, \theta_n)\big\}_{n=1}^\infty$ is bounded. Up to a subsequence, we may assume that
\begin{equation}\label{e4.21}
(t_n, \theta_n)\rightarrow (t_0, \theta_0)~~~\mbox{as}~n\rightarrow \infty
\end{equation}
with $t_0\geq \Lambda$ and $\theta_0\in [0,2\pi]$.
\par

By the continuity of $w$ and \eqref{e4.21}, we have
\begin{equation*}
0\leq w^\Lambda(t_0, \theta_0)=\lim\limits_{n\rightarrow\infty}w_n(t_n, \theta_n)\leq 0, ~~ w^\Lambda(t_0, \theta_0)=0.
\end{equation*}
Hence
\begin{equation*}
t_0=\Lambda,~~~\nabla w^\Lambda(\Lambda, \theta_0)=\lim\limits_{n\rightarrow\infty}\nabla w_n(t_n, \theta_n)=0,
\end{equation*}
which contradicts with the fact that $\frac{\partial w^\Lambda}{\partial t}>0$ on $T_\Lambda$. Thus we exclude the case (ii).
\end{proof}

Making use of Lemma \ref{lemstart} and Lemma \ref{lemcom}, we can prove Proposition \ref{propmoving}.\par
\emph{\bf Proof~of~Proposition \ref{propmoving}:}\par
 Let $\Lambda$ be defined in \eqref{e4.13}. We first assume that $\Lambda \in \mathbb{R}$. Then by Lemma \ref{lemstart}, we have
\begin{equation*}
w^\Lambda(t, \theta)=0~~\mbox{and}~~v(t, \theta)=v(2\Lambda-t, \theta),~~\forall(t,\theta)\in \mathbb{R}^2,
\end{equation*}
which imply (ii).\par

Now, we assume that $\Lambda=-\infty$. By the definition of $\Lambda$, for any $\lambda\in \mathbb{R}$ we conclude that $w^\lambda\geq 0$ and $w^\lambda$ solves
\begin{equation*}
\left\{
\begin{aligned}
    -\Delta w^\lambda+2\big(3\xi^2-1\big)w^\lambda&=0~~&\mbox{in}&~\Sigma_\lambda,\\
     w^\lambda&=0,~~&\mbox{on}&~T_\lambda.
\end{aligned}
\right.
\end{equation*}
Hence by Maximum principle, we deduce that either $w^\lambda\equiv0$ in $\Sigma_\lambda$ or $w^\lambda>0$ in $\Sigma_\lambda$ and $\frac{\partial w^\lambda}{\partial t}>0$ on $T_\lambda$. If $w^{\lambda}\equiv0$, then we obtain (ii) with $\Lambda=\lambda$. Otherwise, $\frac{\partial w^\lambda}{\partial t}=2v_t>0$ in $T_\lambda$ for any $\lambda\in \mathbb{R}$, and we obtain (i).\qed

With the help of Proposition \ref{propmoving}, we can prove $v$ that satisfies conditions of Proposition  \ref{propmoving} is independent of $\theta$.
\begin{lemma}\label{lemone}
Suppose the conditions of Proposition \ref{propmoving} are satisfied. If $\exists \Lambda \in \mathbb{R}$ such that
$v(t, \theta)=v(2\Lambda-t, \theta)$ for any $(t,\theta)\in \mathbb{R}^2$, then
\begin{equation*}
v\equiv 1~~\mbox{in}~\mathbb{R}^2.
\end{equation*}
\end{lemma}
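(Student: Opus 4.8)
The plan is to exploit the reflection symmetry $v(t,\theta)=v(2\Lambda-t,\theta)$ to reduce the problem to the line $t=\Lambda$, where the gradient in $t$ vanishes, and then to combine this with the boundary behaviour $v\to 1$ as $t\to+\infty$ and the bound $|v|\le 1$ to pin down $v$ completely. First I would observe that, by the symmetry \eqref{e4.2}, we have $v_t(\Lambda,\theta)=0$ for every $\theta$, so the function $g(\theta):=v(\Lambda,\theta)$ satisfies the ODE $-g''=2g(1-g^2)$ on $\mathbb{R}$ with $g(\theta)=g(\theta+2\pi)$ and $|g|\le 1$. The one-dimensional Allen–Cahn ODE on the circle is well understood: its first integral is $(g')^2=(g^2-1)^2+c$ for some constant $c$, and the same kind of analysis already carried out in Lemmas \ref{lemidentity}–\ref{classification2} (now on a compact interval with periodic boundary conditions) shows that either $g$ is a nonconstant periodic orbit or $g\equiv 0$ or $g\equiv\pm 1$; I would argue the nonconstant periodic orbits are ruled out by a period/length matching obstruction (their minimal period cannot divide $2\pi$ in a way compatible with the extra constraints) or, more robustly, deferred to the next step where the full two-dimensional information forces $g$ to be constant.

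The cleaner route, which I would actually pursue, is to use the limit $v(t,\theta)\to 1$ together with the reflection directly. Because $v(t,\theta)=v(2\Lambda-t,\theta)$, the hypothesis $\lim_{t\to+\infty}v(t,\theta)=1$ uniformly in $\theta$ forces $\lim_{t\to-\infty}v(t,\theta)=1$ uniformly as well; hence $v$ is a bounded solution of \eqref{e4.1} on the cylinder $\mathbb{R}\times(\mathbb{R}/2\pi\mathbb{Z})$ with $v\to 1$ at both ends. Next I would set $w:=1-v\ge 0$; then $w\to 0$ at both ends, $0\le w\le 2$, and $w$ solves $-\Delta w=2(1-v)(1+v)(\text{...})$; more precisely $-\Delta w = -2v(1-v^2) = -2(1-w)(2w - w^2) = -2(1-w)w(2-w)$, so $-\Delta w + 2(1-w)(2-w)\,w=0$ with the coefficient $2(1-w)(2-w)$ positive wherever $w$ is small. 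Since $w\to 0$ uniformly at $t\to\pm\infty$, for $|t|$ large the coefficient is bounded below by a positive constant, and a maximum-principle / sliding argument on the cylinder — together with the fact that $w\ge 0$ — shows $w$ cannot attain a positive interior maximum unless it is identically zero; a Phragmén–Lindelöf type argument handles the (compact in $\theta$, infinite in $t$) geometry, exactly in the spirit of the maximum-principle arguments in Lemma \ref{lemstart} and Lemma \ref{lemcom}. This yields $w\equiv 0$, i.e. $v\equiv 1$.

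The step I expect to be the main obstacle is making the maximum principle rigorous on the non-compact cylinder: $w\ge 0$ alone does not immediately give $w\equiv 0$ (the coefficient $2(1-w)(2-w)$ degenerates where $w=1$, e.g. where $v=0$), so one must genuinely use the decay $w\to 0$ at the ends to localize. I would address this by choosing $R$ large enough that $w<\tfrac12$ for $|t|\ge R$, hence the zeroth-order coefficient is $\ge \tfrac32>0$ there; on the compact region $|t|\le R$ one argues that $\min w$ is attained, and if this minimum were $0$ at an interior point the strong maximum principle applied to the equation $-\Delta w + c(x)w = 0$ (valid near that point) forces $w\equiv 0$ nearby and then everywhere by connectedness. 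If instead $w>0$ everywhere, one derives a contradiction with $w\to 0$ by sliding a suitable subsolution (for instance using the reflection symmetry, a "moving plane in $t$" argument as in Proposition \ref{propmoving}, or comparing with an explicit barrier $\varepsilon\cosh(\kappa t)$-type function scaled to the strip) from $+\infty$ inward until it touches $w$, producing a forbidden interior contact point. Once $w\equiv 0$ is established the conclusion $v\equiv 1$ in $\mathbb{R}^2$ is immediate.
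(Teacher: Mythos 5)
Your proposal does not close, and the gap is precisely at the point where the paper brings in its key tool. Your first route contains a false step: from the symmetry you only get $v_t(\Lambda,\theta)=0$, not $v_{tt}(\Lambda,\theta)=0$, so the trace $g(\theta)=v(\Lambda,\theta)$ satisfies $-g''(\theta)=2g(1-g^2)+v_{tt}(\Lambda,\theta)$ and \emph{not} the one-dimensional Allen--Cahn ODE; the whole phase-plane classification you invoke for $g$ is therefore unavailable. Your second route sets $w=1-v\ge 0$ and writes $-\Delta w+2(1-w)(2-w)w=0$; note that $2(1-w)(2-w)=2v(1+v)$, which is strictly negative wherever $-1<v<0$ and vanishes where $v=0$ or $v=-1$. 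On the compact core of the cylinder where $v$ may be nonpositive, the zeroth-order coefficient has the wrong sign and the maximum principle gives nothing. Your proposed repairs do not resolve this: there is no monotonicity in $t$ to slide against (by hypothesis $v$ is \emph{even} in $t-\Lambda$, so it cannot be monotone unless constant, which is what you are trying to prove); and a barrier of $\cosh(\kappa t)$ type grows at infinity while $w$ decays, so it cannot be slid in from $+\infty$ to produce an interior touching point in the useful direction. What remains correct in your argument is only the easy half: if $v=1$ at some point (equivalently $w$ has an interior zero), the strong maximum principle gives $v\equiv 1$. Ruling out $0<w$ everywhere is the real content, and comparison arguments alone do not reach it.

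The paper's proof supplies the missing ingredient: a conserved Hamiltonian in $t$. Multiplying \eqref{e4.1} by $v_t$ and integrating over a $\theta$-period gives
\begin{equation*}
-\tfrac12\int_0^{2\pi}v_t^2\,\mathrm{d}\theta+\tfrac12\int_0^{2\pi}v_\theta^2\,\mathrm{d}\theta
-\int_0^{2\pi}v^2\,\mathrm{d}\theta+\tfrac12\int_0^{2\pi}v^4\,\mathrm{d}\theta\equiv c_1 .
\end{equation*}
Sending $t\to\infty$ (where $v\to1$ and, by elliptic estimates, $\nabla v\to0$) fixes $c_1=-\pi$; evaluating at $t=\Lambda$, where the symmetry kills $\int v_t^2$, and applying H\"older to compare $\int v^2$ with $(\int v^4)^{1/2}$ yields
$\frac{1}{2\pi}\int_0^{2\pi}v_\theta^2\,\mathrm{d}\theta+\bigl[(\frac{1}{2\pi}\int_0^{2\pi}v^4\,\mathrm{d}\theta)^{1/2}-1\bigr]^2\le 0$,
forcing $v_\theta\equiv0$ and $v\equiv\pm1$ on the line $t=\Lambda$; the case $v\equiv-1$ is excluded by the limit at $+\infty$, and the strong maximum principle then propagates $v\equiv1$. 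This is exactly the quantitative use of $v_t(\Lambda,\cdot)=0$ that your proposal needs but does not have.
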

\begin{proof} Since $v$ is symmetric with respect to the plane $t=\Lambda$, we have
\begin{equation}\label{e4.22}
v_t(\Lambda, \theta)=0, ~~\forall\theta\in \mathbb{R}.
\end{equation}\par

Multiplying both sides of equation \eqref{e4.1} by $v_t$ and integrating over $[0,2\pi]$ with respect to $\theta$, we have
\begin{equation}\label{e4.23}
-\int_0^{2\pi}v_{tt}v_t\mathrm{d}\theta-\int_0^{2\pi}v_{\theta\theta}v_t\mathrm{d}\theta
=2\int_0^{2\pi}vv_t\mathrm{d}\theta-2\int_0^{2\pi}v^3v_t\mathrm{d}\theta.
\end{equation}
Hence
\begin{equation*}
\frac{\mathrm{d}}{\mathrm{d}t}\big[-\frac{1}{2}\int_0^{2\pi}v_t^2\mathrm{d}\theta+\frac{1}{2}\int_0^{2\pi}v_{\theta}^2\mathrm{d}\theta\big]
=\frac{\mathrm{d}}{\mathrm{d}t}\big[\int_0^{2\pi}v^2\mathrm{d}\theta-\frac{1}{2}\int_0^{2\pi}v^4\mathrm{d}\theta\big]
\end{equation*}
and there exists constant $c_1\in \mathbb{R}$ such that
\begin{equation}\label{e4.24}
-\frac{1}{2}\int_0^{2\pi}v_t^2\mathrm{d}\theta+\frac{1}{2}\int_0^{2\pi}v_{\theta}^2\mathrm{d}\theta
=\int_0^{2\pi}v^2\mathrm{d}\theta-\frac{1}{2}\int_0^{2\pi}v^4\mathrm{d}\theta+c_1~~~~\mbox{in}~\mathbb{R}.
\end{equation}
\par

Note that $\lim\limits_{t\rightarrow\infty}v(t,\theta)=1$ uniformly for $\theta\in \mathbb{R}$, by elliptic theory
\begin{equation}\label{e4.25}
\lim\limits_{t\rightarrow\infty}v_t(t,\theta)=\lim\limits_{t\rightarrow\infty}v_\theta(t,\theta)=0
\end{equation}
uniformly for $\theta\in \mathbb{R}$. Letting $t\rightarrow\infty$ in both sides of equation \eqref{e4.24}, by \eqref{e4.25} we have
\begin{equation}\label{e4.26}
0=\lim\limits_{t\rightarrow\infty}LHS=\lim\limits_{t\rightarrow\infty}RHS=\pi+c_1~~\mbox{and}~~c_1=-\pi.
\end{equation}
Hence by \eqref{e4.24} and \eqref{e4.26}, we have
\begin{equation}\label{e4.27}
\frac{1}{2\pi}\int_0^{2\pi}v_{\theta}^2\mathrm{d}\theta+\frac{1}{2\pi}\int_0^{2\pi}v^4\mathrm{d}\theta+1
=\frac{1}{2\pi}\int_0^{2\pi}v_t^2\mathrm{d}\theta+\frac{2}{2\pi}\int_0^{2\pi}v^2\mathrm{d}\theta
\end{equation}
for any  $t\in \mathbb{R}$.\par

Now, fix $t=\Lambda$. By \eqref{e4.22}, we have
\begin{equation}\label{e4.28}
\frac{1}{2\pi}\int_0^{2\pi}v_t^2\mathrm{d}\theta=0
\end{equation}
and by H\"{o}lder inequality, we have
\begin{equation}\label{e4.29}
\frac{1}{2\pi}\int_0^{2\pi}v^2\mathrm{d}\theta\leq\bigg(\frac{1}{2\pi}\int_0^{2\pi}v^4\mathrm{d}\theta\bigg)^{\frac{1}{2}}.
\end{equation}
Then by \eqref{e4.27}, \eqref{e4.28} and \eqref{e4.29}, we deduce that
\begin{equation}\label{e4.30}
\frac{1}{2\pi}\int_0^{2\pi}v_{\theta}^2\mathrm{d}\theta+\bigg[\big(\frac{1}{2\pi}\int_0^{2\pi}v^4\mathrm{d}\theta\big)^{\frac{1}{2}}-1
\bigg]^2\leq 0
\end{equation}
at $t=\Lambda$. Hence at $t=\Lambda$, by \eqref{e4.30} and the fact that $|v(t,\theta)|\leq 1$ for any $(t,\theta)\in \mathbb{R}^2$ we have
\begin{equation}\label{e4.31}
v_{\theta}\equiv 0,~~\frac{1}{2\pi}\int_0^{2\pi}v^4\mathrm{d}\theta=1~~\mbox{and}~~v\equiv\pm1.
\end{equation}\par

If $v\equiv-1$ at $t=\Lambda$, then by Maximum principle and the facts that $|v(t,\theta)|\leq 1$ for any $(t,\theta)\in \mathbb{R}^2$ and $v_0\equiv-1$ is also a solution to equation \eqref{e4.1}, we deduce that
\begin{equation*}
v\equiv -1~~\mbox{in}~~\mathbb{R}^2,
\end{equation*}
which contradicts with the fact that $\lim\limits_{t\rightarrow\infty}v(t,\theta)=1$ uniformly for $\theta\in \mathbb{R}$. Thus $v\equiv1$ at $t=\Lambda$, and by Maximum principle we have
\begin{equation*}
v\equiv 1~~\mbox{in}~~\mathbb{R}^2,
\end{equation*}
which implies this lemma.
\end{proof}

\begin{lemma}\label{lemsym}
Suppose the conditions of Proposition \ref{propmoving} are satisfied. If $v_t(t,\theta)>0$ for any $(t,\theta)\in \mathbb{R}^2$, then
\begin{equation*}
v(t,\theta)=v(t)~~\mbox{in}~\mathbb{R}^2.
\end{equation*}
\end{lemma}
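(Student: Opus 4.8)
The plan is to treat the monotonicity assumption $v_t>0$ as putting us in the setting of the Berestycki--Caffarelli--Nirenberg Liouville-type theorem for the nonlinearity $f(v)=2v(1-v^2)$ on a cylinder, and to exploit the extra rigidity coming from the boundedness $|v|\le 1$ together with the uniform limit $\lim_{t\to\infty}v(t,\theta)=1$. First I would record that since $v_t>0$ everywhere and $v$ is bounded, the limits $\bar v_{\pm}(\theta):=\lim_{t\to\pm\infty}v(t,\theta)$ exist pointwise and, by elliptic estimates on the translates $v(t+t_n,\cdot)$, are $C^2$ functions of $\theta$ solving the one-dimensional problem $-\bar v''=2\bar v(1-\bar v^2)$ on the circle $\mathbb{R}/2\pi\mathbb{Z}$; the upper limit is forced to be $\bar v_+\equiv 1$ by hypothesis, and $\bar v_-$ is some $2\pi$-periodic solution with $\bar v_-\le v(t,\theta)\le 1$.

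Next I would set up the BCN-type argument directly. Introduce the translations $v^\tau(t,\theta):=v(t-\tau,\theta)$ and consider the family of differences; the key point is a sliding/moving-plane argument in the $t$-direction combined with the positivity of $v_t$, which yields that $v$ is \emph{stable} in the sense that the linearized operator $-\Delta - 2(1-3v^2)$ admits a positive supersolution, namely $v_t$ itself (indeed differentiating \eqref{e4.1} in $t$ gives $-\Delta v_t = 2(1-3v^2)v_t$, and $v_t>0$). Stability plus the cylindrical geometry is exactly the hypothesis of the Berestycki--Caffarelli--Nirenberg Liouville theorem, whose conclusion is that any bounded solution on $\mathbb{R}\times(\mathbb{R}/2\pi\mathbb{Z})$ that is monotone in the unbounded variable must depend only on that variable. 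Applying this gives $\partial_\theta v\equiv 0$, i.e. $v=v(t)$, which is the assertion.

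An alternative, more self-contained route that I would keep in reserve avoids quoting the full Liouville theorem: multiply \eqref{e4.1} by $v_\theta$, integrate over $[0,2\pi]$ in $\theta$, and use periodicity to kill the boundary terms; this produces a conserved quantity of the form $\frac{d}{dt}\big[\int_0^{2\pi}(v_t v_\theta)\,d\theta\big]$-type identities, or better, test the equation against $v_{\theta\theta}$ and integrate by parts to obtain a differential inequality for $g(t):=\int_0^{2\pi} v_\theta^2\,d\theta$. Using the uniform convergence $v\to 1$ and the elliptic gradient bound $\lim_{t\to\infty}(v_t,v_\theta)=0$ (exactly as in \eqref{e4.25}), together with monotonicity which forbids oscillation as $t\to -\infty$, one shows $g$ cannot be positive anywhere without violating one of these asymptotics, forcing $v_\theta\equiv 0$.

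The main obstacle is the behavior as $t\to -\infty$: boundedness alone does not pin down $\bar v_-$, and a priori it could be a nonconstant periodic solution of the ODE on the circle, so the argument must either invoke the stability/$v_t>0$ structure decisively (the BCN mechanism) or extract enough from the integral identities to rule out a nonconstant $\theta$-profile at $-\infty$ as well. I would expect the cleanest write-up to lean on the Liouville theorem of Berestycki, Caffarelli and Nirenberg as cited in the introduction, verifying carefully that $|v|\le 1$ gives the required boundedness and that $v_t>0$ gives the monotonicity hypothesis in the (single) unbounded direction of the cylinder, so that the theorem applies verbatim.
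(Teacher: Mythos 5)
Your main route is the same one the paper takes: the decisive observation is that $v_t>0$ is a positive solution of the linearized equation $-\Delta z=2(1-3v^2)z$ (obtained by differentiating \eqref{e4.1} in $t$), and the conclusion is extracted from a Liouville theorem of Berestycki, Caffarelli and Nirenberg. However, as written there is a genuine gap in how that theorem is invoked. The statement you attribute to BCN --- that a bounded solution on the cylinder which is monotone in the unbounded variable must depend only on that variable --- is not the BCN Liouville theorem; it is essentially the assertion you are trying to prove (the two-dimensional De Giorgi statement, specialized to $2\pi$-periodic solutions), so quoting it ``verbatim'' is circular. The BCN result actually needed is the degenerate Liouville theorem: if $\operatorname{div}\bigl(\sigma^2\nabla\varphi\bigr)=0$ in $\mathbb{R}^2$ with $\sigma>0$ and $\int_{B_R}(\sigma\varphi)^2\le CR^2$, then $\varphi$ is constant. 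To use it one must (a) note that $v_\theta$ also solves the linearized equation; (b) set $\sigma=v_t$, $\varphi=v_\theta/v_t$ and verify $\operatorname{div}\bigl(v_t^2\nabla(v_\theta/v_t)\bigr)=v_\theta\Delta v_t-v_t\Delta v_\theta=0$, together with the quadratic area growth (which holds because $v$ is bounded and hence $v_\theta$ is bounded by elliptic estimates); and (c) having obtained $v_\theta\equiv c\,v_t$, use the $2\pi$-periodicity of $v$ in $\theta$: since $\int_0^{2\pi}v_\theta\,\mathrm{d}\theta=0$, the derivative $v_\theta$ vanishes somewhere on each circle, and $v_t>0$ then forces $c=0$, i.e. $v_\theta\equiv0$.

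Step (c) is not cosmetic: without it the quotient argument only shows that $\nabla v$ points in a fixed direction, i.e. that $v$ is one-dimensional in \emph{some} direction, not that it is independent of $\theta$. Steps (a)--(c) are precisely the paper's proof of this lemma, so your proposal correctly names the mechanism (positive solution of the linearized equation plus a BCN-type Liouville theorem) but omits its substance --- the quotient construction and the periodicity argument that pins down the direction. Your fallback route via integral identities for $\int_0^{2\pi}v_\theta^2\,\mathrm{d}\theta$ is left speculative and, as you acknowledge, does not control the profile as $t\to-\infty$ (which could a priori be a nonconstant periodic ODE solution on the circle), so it cannot be credited as a proof either.
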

\begin{proof}
Differentiating both sides of equation \eqref{e4.1} with respect to $t$, we conclude that $v_t$ is a positive solution to the equation
\begin{equation}\label{e4.32}
-\Delta z=2\big(1-3v^2\big)z
\end{equation}
in the $(t,\theta)-$ plane with $z(t,\theta)=z(t,\theta+2\pi)$ for any $\theta, t\in \mathbb{R}$. It is obvious that $v_\theta$ also solves the equation \eqref{e4.32}. Hence
\begin{equation*}
\nabla \bigg(\frac{v_\theta}{v_t}\bigg)=\frac{v_t\nabla v_\theta-v_\theta\nabla v_t}{v_t^2}
\end{equation*}
and
\begin{equation}\label{e4.33}
\begin{aligned}
-div\bigg[v_t^2\nabla \big(\frac{v_\theta}{v_t}\big)\bigg]&=-div\big(v_t\nabla v_\theta-v_\theta\nabla v_t\big)\\
&=-v_t\Delta v_\theta+v_\theta\Delta v_t\\
&=2\big(1-3v^2\big)v_tv_\theta-2\big(1-3v^2\big)v_tv_\theta=0~~~\mbox{in}~\mathbb{R}^2.
\end{aligned}
\end{equation}
By a Liouville type theorem in \cite{BCN} for $\mathbb{R}^2$, we know that there exists $c_2\in \mathbb{R}$ such that
\begin{equation}\label{e4.34}
\frac{v_\theta}{v_t}\equiv c_2 ~~~\mbox{in}~\mathbb{R}^2.
\end{equation}
Since $v(t,\cdot)$ is $2\pi$ period for any $t\in \mathbb{R}$, $v_\theta$ has zeros. Hence $c_2$ in \eqref{e4.34} is equal to $0$ and
\begin{equation*}
v_\theta\equiv 0 ~~~\mbox{in}~\mathbb{R}^2,
\end{equation*}
which implies $v(t,\theta)=v(t)$ in $\mathbb{R}^2$.
\end{proof}

In the end of this section, we study the bounded classical solutions which take value $0$ at the origin.

\begin{lemma}\label{lemequalzero}
Suppose $u$ is a bounded classical solution of equation \eqref{wac} with $u(0)=0$. Then
\begin{equation*}
u\equiv 0~~~\mbox{in}~\mathbb{R}^2.
\end{equation*}
\end{lemma}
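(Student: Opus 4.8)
The plan is to pass to the Emden variable and work with $v(t,\theta)=u(e^t\cos\theta,e^t\sin\theta)$. By Lemma~\ref{lemv}, $v\in C^2(\mathbb{R}^2)$ solves \eqref{e4.1} with $v(t,\theta)=v(t,\theta+2\pi)$, and by Lemma~\ref{lembdd} we have $|v|\le 1$. Since $u$ is continuous at the origin with $u(0)=0$, the point $(e^t\cos\theta,e^t\sin\theta)\to 0$ uniformly in $\theta$ as $t\to-\infty$, so $v(t,\theta)\to 0$ uniformly in $\theta$ as $t\to-\infty$; combining with $|v|\le1$ and interior elliptic estimates on unit balls centered at points with very negative $t$, we also obtain $v_t(t,\theta),v_\theta(t,\theta)\to 0$ uniformly in $\theta$ as $t\to-\infty$. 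It suffices to prove $v\equiv 0$.

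First I would extract a conserved quantity: multiplying \eqref{e4.1} by $v_t$, integrating over $\theta\in[0,2\pi]$, and using periodicity to integrate the $v_{\theta\theta}v_t$ term by parts, one gets
\[
\frac{d}{dt}\Big[-\tfrac12\int_0^{2\pi} v_t^2\,d\theta+\tfrac12\int_0^{2\pi} v_\theta^2\,d\theta-\int_0^{2\pi} v^2\,d\theta+\tfrac12\int_0^{2\pi} v^4\,d\theta\Big]=0 .
\]
Letting $t\to-\infty$ and using the decay of $v,v_t,v_\theta$ noted above (and dominated convergence, since $|v|\le1$), the bracketed expression equals $0$ for every $t$, i.e.
\[
\int_0^{2\pi} v_\theta^2\,d\theta=\int_0^{2\pi} v_t^2\,d\theta+2\int_0^{2\pi} v^2\,d\theta-\int_0^{2\pi} v^4\,d\theta \qquad\text{for all }t .
\]

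Next set $g(t)=\int_0^{2\pi} v(t,\theta)^2\,d\theta$, a nonnegative bounded ($0\le g\le 2\pi$) function of class $C^2$ with $g(t)\to 0$ as $t\to-\infty$. Differentiating twice and substituting $v_{tt}=-v_{\theta\theta}-2v(1-v^2)$ from \eqref{e4.1} (integrating the $v\,v_{\theta\theta}$ term by parts) yields
\[
g''(t)=2\int_0^{2\pi} v_t^2\,d\theta+2\int_0^{2\pi} v_\theta^2\,d\theta-4\int_0^{2\pi} v^2\,d\theta+4\int_0^{2\pi} v^4\,d\theta .
\]
Plugging in the conserved-quantity identity for $\int_0^{2\pi}v_\theta^2\,d\theta$, the term $-4\int_0^{2\pi}v^2\,d\theta$ cancels and I obtain
\[
g''(t)=4\int_0^{2\pi} v_t^2\,d\theta+2\int_0^{2\pi} v^4\,d\theta\ \ge\ 0 ,
\]
so $g$ is convex on $\mathbb{R}$. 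A convex function on $\mathbb{R}$ that is bounded above is non-increasing; hence for every $t$, letting $s\to-\infty$ in $g(t)\le g(s)$ gives $g(t)\le 0$, and since $g\ge 0$ we conclude $g\equiv 0$. Therefore $v\equiv 0$, i.e.\ $u\equiv 0$ in $\mathbb{R}^2$.

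The main obstacle is pinning down the sign of $g''$: the naive second derivative contains the wrong-sign term $-4\int_0^{2\pi}v^2\,d\theta$, and only after feeding in the first-order conserved quantity—whose constant must be forced to $0$ using the decay of $v,v_t,v_\theta$ at $t=-\infty$—does this term vanish and convexity become available. The accompanying technical point is the uniform decay of $v_t,v_\theta$ at $-\infty$ via interior elliptic regularity, which is routine given $|v|\le1$ and the uniform decay of $v$ inherited from continuity of $u$ at the origin.
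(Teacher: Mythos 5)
Your proof is correct. Up to the key identities it follows the paper's route: the Emden transform, the $\theta$-averaged first integral whose constant is forced to $0$ by the uniform decay of $v,v_t,v_\theta$ as $t\to-\infty$ (this is exactly \eqref{e4.35}--\eqref{e4.38}), and the quantity $g(t)=\int_0^{2\pi}v^2\,\mathrm{d}\theta$ (the paper's $\varphi$) together with the same sign information $g''=4\int_0^{2\pi}v_t^2\,\mathrm{d}\theta+2\int_0^{2\pi}v^4\,\mathrm{d}\theta\ge0$, which the paper records in integrated form in \eqref{e4.46}. Where you genuinely diverge is the endgame. The paper concludes only that $\varphi$ is nondecreasing and then splits into two cases according to whether $\int_{\mathbb{R}}\int_0^{2\pi}v^4$ is finite: in the first case it extracts a sequence $t_n\to\infty$ along which $\varphi(t_n)\to0$ and invokes monotonicity; in the second it derives the Riccati inequality $\varphi'\ge\varphi^2/\pi$ for large $t$ and rules it out by finite-time blow-up. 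You instead observe that $g$ is convex on all of $\mathbb{R}$ and bounded above by $2\pi$ (since $|v|\le1$), hence non-increasing (in fact necessarily constant), so $g(t)\le\lim_{s\to-\infty}g(s)=0$ and $g\equiv0$. This removes the case analysis and the Riccati comparison entirely while using exactly the same inputs, so it is a cleaner finish; it is amusing that the paper proves $\varphi$ is nondecreasing while you prove it is non-increasing, both statements being true precisely because $\varphi$ vanishes identically. The only point to state explicitly is the elementary fact you rely on, that a convex function bounded above on the whole line cannot have a positive (or, by the symmetric argument at $-\infty$, a negative) difference quotient; this is standard and your use of it is sound.
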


\begin{proof}
Let $v$ be defined as in \eqref{e2.7}. Then similar to the proof of Lemma \ref{lemone}, we deduce that there exists constant $c_3\in \mathbb{R}$ such that
\begin{equation}\label{e4.35}
-\int_0^{2\pi}v_t^2\mathrm{d}\theta+\int_0^{2\pi}v_{\theta}^2\mathrm{d}\theta
=2\int_0^{2\pi}v^2\mathrm{d}\theta-\int_0^{2\pi}v^4\mathrm{d}\theta+c_3~~~~\mbox{in}~\mathbb{R}.
\end{equation}
\par

Note that $\lim\limits_{t\rightarrow-\infty}v(t,\theta)=0$ uniformly for $\theta\in \mathbb{R}$, by elliptic theory
\begin{equation}\label{e4.36}
\lim\limits_{t\rightarrow-\infty}v_t(t,\theta)=\lim\limits_{t\rightarrow-\infty}v_\theta(t,\theta)=0
\end{equation}
uniformly for $\theta\in \mathbb{R}$. Letting $t\rightarrow-\infty$ in both sides of equation \eqref{e4.35} and by \eqref{e4.36}, we have
\begin{equation}\label{e4.37}
0=\lim\limits_{t\rightarrow-\infty}LHS=\lim\limits_{t\rightarrow-\infty}RHS=c_3~~\mbox{and}~~c_3=0.
\end{equation}
Hence by \eqref{e4.35} and \eqref{e4.37}, we have
\begin{equation}\label{e4.38}
-\int_0^{2\pi}v_t^2\mathrm{d}\theta+\int_0^{2\pi}v_{\theta}^2\mathrm{d}\theta
=2\int_0^{2\pi}v^2\mathrm{d}\theta-\int_0^{2\pi}v^4\mathrm{d}\theta
\end{equation}
for any  $t\in \mathbb{R}$. Thus for any  $a, r\in \mathbb{R}$ with $a\gg 1$, by \eqref{e4.38} we have
\begin{equation}\label{e4.39}
-\int_{-a}^t\int_0^{2\pi}v_\rho^2\mathrm{d}\rho\mathrm{d}\theta+\int_{-a}^t\int_0^{2\pi}v_{\theta}^2\mathrm{d}\rho\mathrm{d}\theta
=2\int_{-a}^t\int_0^{2\pi}v^2\mathrm{d}\rho\mathrm{d}\theta-\int_{-a}^t\int_0^{2\pi}v^4\mathrm{d}\rho\mathrm{d}\theta.
\end{equation}\par

Multiplying both sides of equation \eqref{e4.1} by $v$ and integrating over $[-a, t]\times [0, 2\pi]$ with respect to $\rho$ and $\theta$, we have
\begin{equation}\label{e4.40}
-\int_{-a}^t\int_0^{2\pi}\big[v_{\rho\rho}v+v_{\theta\theta}v\big]\mathrm{d}\rho\mathrm{d}\theta
=2\int_{-a}^t\int_0^{2\pi}v^2\mathrm{d}\rho\mathrm{d}\theta-2\int_{-a}^t\int_0^{2\pi}v^4\mathrm{d}\rho\mathrm{d}\theta.
\end{equation}
By integrating by parts, we have
\begin{equation}\label{e4.41}
-\int_{-a}^t\int_0^{2\pi}v_{\theta\theta}v\mathrm{d}\rho\mathrm{d}\theta=\int_{-a}^t\int_0^{2\pi}v_\theta^2\mathrm{d}\rho\mathrm{d}\theta
\end{equation}
and
\begin{equation}\label{e4.42}
\begin{aligned}
-\int_{-a}^t\int_0^{2\pi}v_{\rho\rho}v\mathrm{d}\rho\mathrm{d}\theta
&=-\int_0^{2\pi}\big[vv_\rho\big|^t_{-a}-\int_{-a}^tv_\rho^2\mathrm{d}\rho\big]\mathrm{d}\theta\\
&=\int_0^{2\pi}\int_{-a}^tv_\rho^2\mathrm{d}\rho\mathrm{d}\theta-\int_0^{2\pi}vv_\rho\mathrm{d}\theta\bigg|^t_{-a}.
\end{aligned}
\end{equation}
By \eqref{e4.40}, \eqref{e4.41} and \eqref{e4.42}, we have
\begin{equation}\label{e4.43}
\int_{-a}^t\int_0^{2\pi}\big[v_\rho^2+v_\theta^2\big]\mathrm{d}\rho\mathrm{d}\theta
=2\int_{-a}^t\int_0^{2\pi}v^2\mathrm{d}\rho\mathrm{d}\theta
-2\int_{-a}^t\int_0^{2\pi}v^4\mathrm{d}\rho\mathrm{d}\theta+\frac{1}{2}\varphi'(\rho)\big|^t_{-a},
\end{equation}
where
\begin{equation}\label{e4.44}
\varphi(\rho)=\int_0^{2\pi}v^2\mathrm{d}\theta.
\end{equation}\par

By \eqref{e4.39} and \eqref{e4.43}, we conclude that
\begin{equation*}
2\int_{-a}^t\int_0^{2\pi}v_\rho^2\mathrm{d}\rho\mathrm{d}\theta
=-\int_{-a}^t\int_0^{2\pi}v^4\mathrm{d}\rho\mathrm{d}\theta+\frac{1}{2}\varphi'(\rho)\big|^t_{-a}
\end{equation*}
and
\begin{equation}\label{e4.45}
\varphi'(t)=4\int_{-a}^t\int_0^{2\pi}v_\rho^2\mathrm{d}\rho\mathrm{d}\theta
+2\int_{-a}^t\int_0^{2\pi}v^4\mathrm{d}\rho\mathrm{d}\theta+\varphi'(-a)
\end{equation}
for any $a\gg 1$ and $t\in \mathbb{R}$. By \eqref{e4.36} and the fact that $v$ is bounded, we have
\begin{equation*}
\lim\limits_{\rho\rightarrow-\infty}\varphi'(\rho)=\lim\limits_{\rho\rightarrow-\infty}2\int_0^{2\pi}vv_\rho\mathrm{d}\theta=0,
\end{equation*}
Hence letting $a\rightarrow\infty$ in \eqref{e4.45}, we have
\begin{equation}\label{e4.46}
\varphi'(t)=4\int_{-\infty}^t\int_0^{2\pi}v_\rho^2\mathrm{d}\rho\mathrm{d}\theta
+2\int_{-\infty}^t\int_0^{2\pi}v^4\mathrm{d}\rho\mathrm{d}\theta\geq 0
\end{equation}
for any $t\in \mathbb{R}$, and that $\varphi$ is increasing in $\mathbb{R}$. In the following, we divide the rest of the proofs into two case.\par

\emph{Case 1}: $\int_{-\infty}^\infty\int_0^{2\pi}v^4\mathrm{d}\rho\mathrm{d}\theta<\infty$.  Then there exists a sequence $\big\{t_n\big\}^\infty_{n=1}\subseteq\mathbb{R}$ with $\lim\limits_{n\rightarrow\infty}t_n=\infty$ such that
\begin{equation*}
\lim\limits_{n\rightarrow\infty}\int_0^{2\pi}v^4(t_n, \theta)\mathrm{d}\theta=0.
\end{equation*}
By H\"{o}lder inequality, we have
\begin{equation*}
\lim\limits_{n\rightarrow\infty}\varphi(t_n)=0,
\end{equation*}
which combined with the fact that $\varphi$ is increasing in $\mathbb{R}$ yields that
\begin{equation}\label{e4.47}
\varphi\equiv0~~~\mbox{in}~\mathbb{R}.
\end{equation}
Hence
\begin{equation*}
v\equiv 0~~\mbox{and}~~u\equiv 0~~~\mbox{in}~\mathbb{R}^2.
\end{equation*}\par

\emph{Case 2}: $\int_{-\infty}^\infty\int_0^{2\pi}v^4\mathrm{d}\rho\mathrm{d}\theta=\infty$. Since $v$ is bounded in $\mathbb{R}^2$, $\int_0^{2\pi}v^4\mathrm{d}\theta$ is bounded in $\mathbb{R}$. Thus for $t\gg1$,
\begin{equation*}
\int_{-\infty}^t\int_0^{2\pi}v^4\mathrm{d}\rho\mathrm{d}\theta
\geq\int_0^{2\pi}v^4\mathrm{d}\theta.
\end{equation*}
By \eqref{e4.46} and H\"{o}lder inequality, we have
\begin{equation}\label{e4.48}
\varphi'(t)\geq 2\int_0^{2\pi}v^4\mathrm{d}\theta\geq\frac{2}{2\pi}\bigg(\int_0^{2\pi}v^2\mathrm{d}\theta\bigg)^2=\frac{\varphi^2(t)}{\pi}
\end{equation}
for $t\gg1$.\par

If $\varphi(t_0)>0$, then the fact that $\varphi$ is increasing in $\mathbb{R}$ yields that $\varphi(t)>0$ for any $t>t_0$. Hence by \eqref{e4.48}, we deduce that there exists $c_4\in \mathbb{R}$ such that
\begin{equation*}
-\frac{1}{\varphi(t)}\geq\frac{t}{\pi}+c_4,~~~\frac{1}{\varphi(t)}+\frac{t}{\pi}+c_4\leq0
\end{equation*}
for any $t\gg 1$, which is impossible. Hence
\begin{equation*}
\varphi\equiv0~~~\mbox{in}~\mathbb{R},
\end{equation*}
which implies that
\begin{equation*}
v\equiv 0~~\mbox{and}~~u\equiv 0~~~\mbox{in}~\mathbb{R}^2.
\end{equation*}
\end{proof}

Now, we turn to the proof of Theorem \ref{thm2}.\par
\emph{\bf Proof~of~Theorem \ref{thm2}:}\par
 By Lemma \ref{lemzero}, we have
\begin{equation*}
u(0)=0~~ \mbox{or}~~u(0)=\pm 1.
\end{equation*}

\par If $u(0)=0$, then by Lemma \ref{lemequalzero} we have $u\equiv 0$. If $u(0)=1$, then we define
$$\tilde{u}(x)=u\bigg(\frac{x}{|x|^2}\bigg)~~\mbox{and}~~v(t, \theta)=\tilde{u}(r\cos \theta, r\sin \theta)$$
with $r=e^{-t}$. By Lemma \ref{lemkel}, Lemma \ref{lemv}, Proposition \ref{propmoving}, Lemma \ref{lemone} and Lemma \ref{lemsym}, we deduce that $v(t,\theta)=v(t)$, and thus $u$ is radially symmetric. Hence by Theorem \ref{thm1}, we have
$$u(x)=\frac{a^2-|x|^2}{a^2+|x|^2}~~\mbox{for~some~}a\in \mathbb{R}.$$

\par
If $u(0)=-1$, then $-u$ is also a solution of equation \eqref{wac} with $-u(0)=1$. By the above arguments, we have
$$u(x)=-\frac{a^2-|x|^2}{a^2+|x|^2}~~\mbox{for~some~}a\in \mathbb{R}.$$\qed
\section*{Acknowledgements}
\noindent

The authors thank the referees for their valuable suggestions.


\end{document}